\renewcommand{\theequation}{\thesection.\arabic{equation}}
\newtheorem{thm}{Theorem}[section]
\newtheorem{lem}[thm]{Lemma}
\newtheorem{rem}[thm]{Remark}
\begin{document}
\newcommand{\BX}{{\bf X}}
\newcommand{\cv}{{\cal V}}
\newcommand{\cW}{{\cal W}}
\newcommand{\co}{{\cal O}}

\renewcommand{\theequation}{\thesection.\arabic{equation}}
\def\@eqnnum{{\reset@font\rm (\theequation)}}

\def\abstract{
\advance \rightskip by 10mm
\advance \leftskip by 10mm
\vspace{-0.8em}
\noindent
\small{\bf Abstract.}
}
\def\endabstract{\par\normalsize\rm}

\def\Xint#1{\mathchoice
{\XXint\displaystyle\textstyle{#1}}%
{\XXint\textstyle\scriptstyle{#1}}%
{\XXint\scriptstyle\scriptscriptstyle{#1}}%
{\XXint\scriptscriptstyle\scriptscriptstyle{#1}}%
\!\int}
\def\XXint#1#2#3{{\setbox0=\hbox{$#1{#2#3}{\int}$}
\vcenter{\hbox{$#2#3$}}\kern-.5\wd0}}
\def\ddashint{\Xint=}
\def\dashint{\Xint-}

\def\a{\alpha}
\def\b{\beta}
\def\d{\delta}\def\D{\Delta}
\def\e{\epsilon}
\def\g{\gamma}\def\G{\Gamma}
\def\k{\kappa}
\def\lam{\lambda}\def\Lam{\Lambda}
\renewcommand\o{\omega}\renewcommand\O{\Omega}
\def\s{\sigma}\def\S{\Sigma}
\renewcommand\t{\theta}\def\vt{\vartheta}
\newcommand{\vphi}{\varphi}
\def\z{\zeta}

\newcommand{\tsigma}{\tilde{\s}}
\newcommand{\tbsigma}{\tilde{\bsigma}}
\def\te{\tilde{\e}}
\def\tu{\tilde{u}}

\newcommand{\bchi}{\mbox{\boldmath$\chi$}}
\newcommand{\bdelta}{\mbox{\boldmath$\delta$}}
\newcommand{\bepsilon}{\mbox{\boldmath$\epsilon$}}
\newcommand{\bfeta}{\mbox{\boldmath$\eta$}}
\newcommand{\bgamma}{\mbox{\boldmath$\gamma$}}
\newcommand{\bomega}{\mbox{\boldmath$\omega$}}
\newcommand{\bvphi}{\mbox{\boldmath$\varphi$}}
\newcommand{\bphi}{\mbox{\boldmath$\phi$}}
\newcommand{\bPhi}{\mbox{\boldmath$\Phi$}}
\newcommand{\bpsi}{\mbox{\boldmath$\psi$}}
\newcommand{\bPsi}{\mbox{\boldmath$\Psi$}}
\newcommand{\bsigma}{\mbox{\boldmath$\sigma$}}
\newcommand{\btau}{\mbox{\boldmath$\tau$}}
\newcommand{\bxi}{\mbox{\boldmath$\xi$}}
\newcommand{\brho}{\mbox{\boldmath$\rho$}}
\newcommand{\bbeta}{\mbox{\boldmath$\beta$}}
\newcommand{\bzeta}{\mbox{\boldmath$\zeta$}}

\def\bk{\boldsymbol{\kappa}}
\def\bmu{\boldsymbol\mu}
\def\bxi{\boldsymbol{\xi}}
\def\bz{\boldsymbol{\zeta}}

\def\ba{{\bf a}}
\def\bb{{\bf b}}
\def\bc{{\bf c}}
\def\be{{\bf e}}
\def\bff{{\bf f}}
\def\bg{{\bf g}}
\def\bn{{\bf n}}
\def\bp{{\bf p}}
\def\bq{{\bf q}}
\def\bs{{\bf s}}
\def\bt{{\bf t}}
\def\bu{{\bf u}}
\def\bv{{\bf v}}
\def\bw{{\bf w}}
\def\bx{{\bf x}}
\def\by{{\bf y}}
\def\bzz{{\bf z}}

\def\bD{{\bf D}}
\def\bE{{\bf E}}
\def\bF{{\bf F}}
\def\bH{{\bf H}}
\def\bJ{{\bf J}}
\def\bV{{\bf V}}
\def\bU{{\bf U}}
\def\bW{{\bf W}}
\def\bX{{\bf X}}
\def\bY{{\bf Y}}

\def\cA{{\cal A}}
\def\cC{{\cal C}}
\def\cD{{\cal D}}
\def\cE{{\cal E}}
\def\cF{{\cal F}}
\def\cG{{\cal G}}
\def\cI{{\cal I}}
\def\cJ{{\cal J}}
\def\cK{{\cal K}}
\def\cL{{\cal L}}
\def\cO{{\cal O}}
\def\cP{{\cal P}}
\def\cQ{{\cal Q}}
\def\cR{{\cal R}}
\def\cS{{\cal \Sigma}}
\def\cT{{\cal T}}
\def\cU{{\cal U}}
\def\cV{{\cal V}}

\def\scT{{_\cT}}
\def\sD{{_D}}
\def\sE{{_E}}
\def\sF{{_F}}
\def\sFz{{_{F_z}}}
\def\sK{{_K}}
\def\sI{{_I}}
\def\sb{{_b}}
\def\sN{{_N}}

\def\curl{{{\bf curl} \ }}
\def\rot{{\mbox{rot}\ }}
\def\BPI{{\bf \Pi}}

\def\cth{\cT_h}
\def\ctH{\cT_H}

\def\tJ{\tilde{\J}}

\def\hK{\widehat{K}}
\def\hx{\widehat{x}}
\def\hy{\widehat{y}}
\def\bhv{\widehat{\bv}}

\def\l{\ell}
\def\bl{\boldsymbol{\ell}}
\def\col{\colon}
\def\f12{\frac12}
\def\dfrac{\displaystyle\frac}
\def\dint{\displaystyle\int}
\def\nab{\nabla}
\def\p{\partial}
\def\sm{\setminus}
\def\dsum{\displaystyle\sum}
\newcommand{\pp}[2]{\frac{\partial {#1}}{\partial {#2}}}
\def\bzero{{\bf 0}}

\def\divv{\nab\cdot}
\def\divx{\nab_x\cdot}
\def\divtx{\nab_{t,x}\cdot}
\def\nabx{\nab_x}

\newcommand{\grad}{\nabla}
\newcommand{\curlt}{{\nabla \times}}
\newcommand{\gperp}{\nabla^{\perp}}
\newcommand{\gradt}{\nabla\cdot}

\def\forallqq{\quad\forall\,}
\def\aph{A^{1/2}}
\def\amh{A^{-1/2}}

\def\osc{{\rm osc \, }}

\def\Im{{\rm Im}}
\newcommand{\tr}{{\rm tr}}
\def\divvr{{\rm div}}
\def\curllr{{\rm curl}}
\def\curll{{\rm curl}}
\def\curl{{\bf curl}}
\newcommand{\bgrad}{{\bf grad}}
\newcommand\diam{\mathrm{diam\,}}
\renewcommand\Im{\mathrm{Im\,}}
\def\Span{\mbox{Span}}
\def\supp{\mbox{supp\,}}
\newcommand{\trace}{{\rm trace}}

\newcommand{\tri}{|\!|\!|}
\newcommand{\ljump}{\lbrack\!\lbrack}
\newcommand{\rjump}{\rbrack\!\rbrack}
\newcommand{\bdm}{\begin{displaymath}}
\newcommand{\edm}{\end{displaymath}}
\newcommand{\beq}{\begin{equation}}
\newcommand{\eeq}{\end{equation}}
\newcommand{\beqa}{\begin{eqnarray}}
\newcommand{\eeqa}{\end{eqnarray}}
\newcommand{\beqas}{\begin{eqnarray*}}
\newcommand{\eeqas}{\end{eqnarray*}}
\newcommand{\ul}{\underline}
\newcommand{\wh}{\widehat}
\newcommand{\la}{\langle}
\newcommand{\ra}{\rangle}

\newcommand{\Lt}{L^2(\Omega)}
\newcommand{\Lts}{L^2(\Omega)^2}
\newcommand{\Ltc}{L^2(\Omega)^3}
\newcommand{\Ho}{H^1(\Omega)}
\newcommand{\Hoh}{H^1(\wh{\Omega})}
\newcommand{\Hoi}{H^1(\Omega_i)}
\newcommand{\Hos}{H^1(\Omega)^2}
\newcommand{\Hoc}{H^1(\Omega)^3}
\newcommand{\Hoch}{H^1(\wh{\Omega})^3}
\newcommand{\Hoci}{H^1(\Omega_i)^3}
\newcommand{\Hoz}{H^1_0(\Omega)}
\newcommand{\Ht}{H^2(\Omega)}
\newcommand{\Hti}{H^2(\Omega_i)}
\newcommand{\Hts}{H^2(\Omega)^2}
\newcommand{\Htc}{H^2(\Omega)^3}
\newcommand{\Htz}{H^0(\Omega)}
\newcommand{\Hh}{H^{1/2}(\Gamma)}
\newcommand{\Hhi}{H^{1/2}(\Gamma_i)}
\newcommand{\Hmh}{H^{-1/2}(\Gamma)}
\newcommand{\Hdiv}{H(\divvr;\,\Omega)}
\newcommand{\Hdivh}{H(\divv;\,\wh \Omega)}
\newcommand{\hcurl}{H(\curl\,A;\,\Omega)}
\newcommand{\Hcurl}{H(\curll\,A;\,\Omega)}
\newcommand{\Hcrl}{H(\curll\,;\,\Omega)}
\newcommand{\hcrl}{H(\curl\,;\,\Omega)}
\newcommand{\Hcrlh}{H(\curll\,;\,\wh\Omega)}
\newcommand{\hcrlh}{H(\curl\,;\,\wh\Omega)}
\newcommand{\Wdiv}{\BW_0(\mbox{\divv}\,;\,\Omega)}
\newcommand{\Wcurl}{\BW_0(\mbox{\curl}\,A;\,\Omega)}
\newcommand{\WcrossV}{\BW \times V}

\def\calS{{\cal S}}
\def\cH{{\cal H}}
\def\ba{{\mathbf{a}}}
\def\cN{{\cal N}}  

\def\bE{{\bf E}}
\def\bS{{\bf S}}
\def\br{{\bf r}}
\def\bW{{\bf W}}
\def\bLambda{{\bf \Lambda}}

\def\zT{{z_{_{\cT}}}}
\def\vT{{v_{_{\cT}}}}
\def\uT{{u_{_{\cT}}}}

\newcommand{\dd}{\underline{{\mathbf d}}}
\newcommand{\C}{\rm I\kern-.5emC}
\newcommand{\R}{\rm I\kern-.19emR}
\newcommand{\W}{{\mathbf W}}
\def\3bar{{|\hspace{-.02in}|\hspace{-.02in}|}}
\newcommand{\A}{{\mathcal A}}

\newcommand{\aA}{{ \a_{F,_A}}}

\newcommand{\aH}{{ \a_{F,_H}}}

\newcommand{\lJump}{[\![}
\newcommand{\rJump}{]\!]}
\newcommand{\jump}[1]{[\![ #1]\!]}

\newcommand{\red}[1]{{\color{red} {#1} }}

\def\Xint#1{\mathchoice
{\XXint\displaystyle\textstyle{#1}}%
{\XXint\textstyle\scriptstyle{#1}}%
{\XXint\scriptstyle\scriptscriptstyle{#1}}%
{\XXint\scriptscriptstyle\scriptscriptstyle{#1}}%
\!\int}
\def\XXint#1#2#3{{\setbox0=\hbox{$#1{#2#3}{\int}$}
\vcenter{\hbox{$#2#3$}}\kern-.5\wd0}}
\def\ddashint{\Xint=}
\def\dashint{\Xint-}

\title {Robust and Local Optimal A Priori Error Estimates for Interface Problems with Low Regularity: \\
Mixed Finite Element Approximations}
\author{
Shun Zhang\thanks{Department of Mathematics, City University of Hong Kong, Hong Kong SAR, China, shun.zhang@cityu.edu.hk.
}}
\date{\today}
\maketitle

\begin{abstract}
For elliptic interface problems in two- and three-dimensions with a possible very low regularity, this paper establishes a priori error estimates for the Raviart-Thomas and Brezzi-Douglas-Marini mixed finite element approximations. These estimates are robust with respect to the diffusion coefficient and optimal with respect to the local regularity of the solution. Several versions of the robust best approximations of the flux and the potential approximations are obtained. These robust and local optimal a priori estimates provide guidance for constructing robust a posteriori error estimates and adaptive methods for the mixed approximations.
\end{abstract}

\section{Introduction}\label{intro}
\setcounter{equation}{0}

As a prototype of problems with interface singularities, this paper studies {\em a priori} error estimates of mixed finite element methods for the following interface problem (i.e., the diffusion problem with discontinuous coefficients):
\begin{equation}\label{scalar}
	-\nabla\cdot \,(\a(x)\nabla\, u) = f
 	\quad \mbox{in} \,\,\Omega
\end{equation}
with homogeneous Dirichlet boundary conditions (for simplicity)
\beq\label{bc1}
	u = 0 \quad \mbox{on } \p \O,
\eeq
where $\Omega$ is a bounded polygonal domain in $\R^d$ with $d=2$ or $3$; $f \in L^{2}(\O)$ is a given function; and diffusion coefficient $\a(x)$ is positive and piecewise constant with possible large jumps across subdomain boundaries (interfaces):
\[
	\a(x)=\a_i > 0\quad\mbox{in }\,\O_i
	\quad\mbox{for }\, i=1,\,...,\,n.
\]
Here, $\{\Omega_i\}_{i=1}^n$ is a partition of the domain $\O$ with $\O_i$ being an open polygonal domain. 
It is well known that the solution $u$ of problem (\ref{scalar}) belongs to $H^{1+s}(\O)$ with possibly very small $s> 0$, see for example Kellogg \cite{Kel:75}. But we should also note that even the global regularity is low, when a finite element mesh is given, the singularity or those elements whose solution having a large gradient often only appear bear some points, or along a curve. Thus it is a bad idea to use the global regularity and a global uniform mesh-size to do the a priori error estimate. 

In \cite{CHZ:17}, we introduced the idea of robust and local optimal a priori error estimate. The robustness means that the genetic constants appeared in the estimates are independent of the parameters of the equation, the coefficient $\a$ in our case. The local optimality means that in the error estimate, the upper bound is optimal with the regularity of each element and local mesh sizes, instead of using a global uniform mesh size and a global regularity.  

The local optimal and robust a priori error estimate is very important for the adaptive mesh refinement algorithm. Since that all mesh refinements algorithms are based on the so-called "error equi-distribution" principle \cite{NoVe:12}, that is, each element has an almost equal size of the error measured in an appropriate norm, we need to show this is possible via a priori error estimate. In some sense, if we have a known exact solution $u$ so that the a priori error bound can be computed exactly, we should be able to find an optimal mesh with a fixed number of degrees of freedom that each element has a very similar size of the error. Also, in the robust a posteriori error analysis, we always try to find an equivalence between some intrinsic norm of the error and a computable error estimator, the so call the reliability and efficiency bounds. When constructing the error estimator, it is essential to realize that the best the adaptive numerical method can get is restricted by the robust local a priori estimates with respect each elements. This is especially important for the mixed methods, since there are two unknowns, the flux and the potential, and there are various post-processing methods. It is important to find which is the right quantity and norm to estimate in the a posteriori error estimates.

The proof of local optimal and robust a priori error estimate often contains two parts: one is the {\bf robust best approximation} result (Cea's lemma type of result), which has its own importance; the other is the {\bf robust local approximation properties of the interpolation operator}.

Before we discuss the robust best approximation result and robust local interpolations results for the mixed approximations, we first discuss the corresponding results for the conforming, Crouzeix-Raviart nonconforming, and discontinuous Galerkin results of the interface problem.


For the interface problem (\ref{scalar}), the robust best approximation property is well known and it almost trivial for the $H^1$ conforming approximation:
$$
\|\a^{1/2}\nabla (u-u_k^c)\|_0 \leq \inf_{v_k^c \in V_k^c}\|\a^{1/2}\nabla (u-v_k^c)\|_0,
$$
where $V_k^c$ is the $k$-th degree $H^1_0$-conforming finite element space, and $u_k^c$ is the corresponding $H^1$ conforming approximation. 

On the other hand, the proofs of the robust best approximation for CR nonconforming and discontinuous Galerkin is not easy. In \cite{CHZ:17}, for the Croueix-Raviart nonconforming element approximation, we showed the robust best approximation property (the constant $C$ independent of $\a$ and mesh size):
$$
\|\a^{1/2}\nabla_h (u-u_1^{nc})\|_0 \leq C\left( \inf_{v_1^{nc} \in V_1^{nc}}\|\a^{1/2}\nabla_h (u-v_1^{nc})\|_0 +\osc_{\alpha,nc} \right),
$$
where $V_1^{nc}$ is the Crouzeix-Raviart non-conforming finite element space, and $u_1^{nc}$ is the corresponding non-conforming approximation, and $\osc_{\alpha,nc}$ is a robust oscillation term. Also in \cite{CHZ:17}, for the discontinuous Galerkin approximation, we showed the robust best approximation property (the constant $C$ independent of $\a$ and mesh size):

$$
\tri u-u_k^{dg}\tri_{dg} \leq C\left( \inf_{v_k^{dg} \in D_k}\tri u-v_k^{dg}\tri_{dg}  +\osc_{\alpha,dg} \right),
$$
where $D_k$ is the $k$-th degree discontinuous finite element space, and $u_k^{dg}$ is the corresponding discontinuous Galerkin approximation, $\tri \cdot\tri_{dg}$ is the $\a$-weighted $H^1$ discontinuous Galerkin norm, and $\osc_{\alpha,dg}$ is a robust oscillation term.

The local approximation properties of the interpolation operators for the DG space and Crouzeix-Raviart  is easy to show. For the conforming finite element approximation, there are two types of local interpolations: nodal interpolations which require high regularity of the solution, and the Scott-Zhang or Clement interpolations whose regularity requirement is very low. For the nodal interpolation, it is completely local in each  element, but the it need very high regularity to exist, especially in three dimensions. For the Scott-Zhang/Clement interpolations, since they are defined on a local patch, their local robustness depends on a non-realistic assumption, the quasi-monotonicity assumption, see \cite{DrSaWi:96, BeVe:00, CaZh:09, CHZ:17}. Thus, the existence of robust local optimal result for the conforming finite element approximation for the low regularity interface problem is still open.   

For the mixed methods, we have two unknowns, one is the flux $\bsigma$, and the other is the potential $u$. For the potential $u$, the discontinuous finite element approximation is used, so the robust local interpolation property is obvious. We use Raviart-Thomas or Brezzi-Douglas-Marini elements to approximate to the flux variable, a robust local interpolation property can be proved by the average Taylor series technique developed in  \cite{DuSc:80}. This leaves the main task of proving the robust local optimal error estimates to the proof of the robust best approximation properties of the mixed methods. Unlike the conforming, non-conforming, or DG methods, we have several choices of the norms and the approximations spaces.


Our first robust best approximation property is simple, the weighted $L^2$-norm of the flux error in the equilibrated discrete spaces, see Theorem 3.2 and 3.3. 

For the potential $u$, in the standard  analysis of the mixed method, the $L^2$ norm is used. It turns out that we have difficulties to have a robust inf-sup condition with the weighted $L^2$ norm for the discrete approximation $u_h$ and a modified $H(\divvr)$ norm. Thus, we use the $\a$- and mesh-dependent norms to do the robust analysis. The choice of norm for $u_h$ is a norm similar to the standard discontinuous Galerkin norm, that is, a weighted discrete $H^1$ norm. With this $\a$- and mesh-dependent norm analysis, we show robust best approximation result for the potential approximation in the $\a$-dependent discrete $H^1$ norm. But since the approximation space for the potential $u$ is not rich enough, the order of approximation of $u$ in the $\a$-dependent discrete $H^1$ norm is one or two orders lower than the flux approximation. This order discrepancy suggests that we should not try to do the robust estimate of the $\a$ weighted discrete $H^1$-norm of the potential approximation in the a posteriori error analysis, as stated the earlier discussion by Kim \cite{Kim:07}.

For the flux approximation, with the help of $\a$- and mesh-dependent analysis, we show the robust best approximation result in the non-equilibrated RT/BDM space with an $\a$ and $h$ weighted $H(\divvr)$ norm for the first time. The corresponding robust and local a priori error estimates are also given without order loss even for the BDM approximations.

Finally, since the discrete $H^1$ norm of the potential approximation $u_h$ is often of a lower order than the corresponding flux approximation, we use Stenberg's post-processing to recover a new approximation with a compatible polynomial degree. We show that for the recovered potential approximation, the robust local best approximation result is true and a robust local a priori error estimates of the same order as the flux approximation is obtained. We also prove a new trace inequality of the normal trace. We also point out in the paper that any recovery or post-processing should based on the flux approximation since it is more accurate.

There are many a priori estimates for mixed methods available. The standard analysis  can be found in the books and papers \cite{DR:82, BBF:13, RT:91, Ga:14}. In these analysis, $L^2$ or $H(\divvr)$ norms are used for the flux approximation and the $L^2$ norm is used for the potential approximation. No robust analysis is discussed in these papers or books. The mesh-dependent norm analysis can be found in \cite{BrVe:96,LS:06}, also, no robust analysis is discussed. In \cite{Voh:07,Voh:10,Kim:07}, many a priori and a posteriori error results are presented for the mixed methods, some are robust and some are non-robust.  No robust and local optimal estimates are discussed for mixed methods before.

The paper is organized as follows. Section 2 describes the mixed finite element methods for the model problem. Various robust best approximations results and robust and local a priori error estimates are presented in Section 3, including the robust best approximation results for the flux in the weighted $L^2$ norm in the discrete equilibrated space and in the weighted $H(\divvr)$ norm in the whole mixed approximations spaces, the robust best approximation result for the potential in weighted discrete $H^1$ norm. In Section 4, we discuss Stenberg's of post-processing and show its robust and local optimal a priori error estimates in each elements. In Section 8, we make some concluding remarks.

%
%
%

\section{Mixed Finite Element Methods}

Introducing the flux 
\[
	\bsigma = -\a(x)\nabla u, 
\]
the mixed variational formulation for the problem in (\ref{scalar}) and (\ref{bc1}) is to find
$(\bsigma,\,u)\in H(\divvr;\O)\times L^2(\O)$ such that
\begin{equation}\label{mixed}
	\left\{\begin{array}{lclll}
 		(\a^{-1}\bsigma,\,\btau)-(\divv \btau,\, u)&=&0 \quad & \forall\,\, \btau \in H(\divvr;\O),\\[2mm]
		(\divv \bsigma, \,v) &=& (f,\,v)&\forall \,\, v\in L^2(\O).
	\end{array}\right.
\end{equation}

Let $\cT=\{K\}$ be a regular triangulation of the domain
$\Omega$ (see, e.g., \cite{Cia:78, BrSc:08}). 
Denote by $h_K$ the diameter of the element $K$. Assume that
interfaces $\{\p\O_i\cap\p\O_j\,:\, i,j=1,\,...,\,n\}$ do not cut
through any element $K\in\cT$. For any element $K\in\cT$, 
denote by $P_k(K)$ the space of polynomials on $K$ with total degree less than or equal to $k$. 

Define the discontinuous piecewise polynomial space of degree $k$ by
$$
D_k = \{ v \in L^2(\O)\, :\, v|_K \in P_k \; \forall\, K\in\cT\}.
$$
Define the $H(\divvr)$ conforming Raviart-Thomas (RT) finite element space 
and Brezzi-Douglas-Marini (BDM) finite element space of order $k$ by
$$
RT_k = \{ \btau \in H(\divvr;\O)\, :\, \btau|_K \in P_k(K)^d + \bx P_k(K) \; \forall\, K\in\cT\}.
$$
and 
$$
BDM_k = \{ \btau \in H(\divvr;\O)\, :\, \btau|_K \in P_k(K)^d \; \forall\, K\in\cT\}.
$$
For mixed problems, $RT_k\times D_k$ and $BDM_{k+1}\times D_k$ are stable pairs. Thus, we use the notation $\Sigma_k$ to denote $RT_k$ or $BDM_{k+1}$. 

The mixed finite element approximation is to find $(\bsigma_h,\,u_h) \in \Sigma_k \times D_k$ such that
\begin{equation}\label{problem_mixed}
	\left\{\begin{array}{lclll}
 		(\a^{-1}\bsigma_h,\,\btau_h)-(\divv \btau_h,\, u_h)&=&0
 		\quad & \forall\,\, \btau_h \in \Sigma_k,\\[2mm]
 		(\divv \bsigma_h,\, v_h) &=& (f,\,v_h)&\forall \,\, v_h\in D_k.
	\end{array}\right.
\end{equation}
Difference between (\ref{mixed}) and (\ref{problem_mixed}) yields the following error equation:
\begin{equation}\label{erroreq_mixed}
	\left\{\begin{array}{lclll}
 	(\a^{-1}(\bsigma-\bsigma_h),\,\btau_h)-(\divv \btau_h,\, u-u_h)&=&0
 	\quad & \forall\,\, \btau_h \in \Sigma_k,\\[2mm]
 	(\divv (\bsigma-\bsigma_h),\, v_h) &=& 0&\forall \,\, v_h\in D_k.
\end{array}\right.
\end{equation}

%
%
%

\section{Robust and Local Optimal A Priori Error Estimates}
\setcounter{equation}{0}

\subsection{Mixed finite element interpolations and approximation properties}
For a fixed $r>0$, denote by $I^{rt,k}_{h}: \Hdiv \cap [H^r(\O)]^d \mapsto RT_k$ the standard $RT$ interpolation operator and  $I^{bdm,k}_{h}: \Hdiv \cap [H^r(\O)]^d \mapsto BDM_k$ the standard $BDM$ interpolation operator.  We have the following local approximation property: for $\btau \in H^{s_K}(K)$, $s_K >0$,
\begin{eqnarray} \label{rti}
\|\btau - I^{\Sigma,k}_{h} \btau\|_{0,K}
  &\leq& C h_K^{\min\{k+1,s_K\}} |\btau|_{\min\{k+1,s_K\},K} \quad\forall\,\, K\in \cT,
\end{eqnarray}
with $I^{\Sigma,k}_{h} = I^{rt,k}_{h}$ or $I^{bdm,k}_{h}$. The estimate in (\ref{rti}) is standard for $s_K\geq 1$ and can be proved by the average Taylor series developed in \cite{DuSc:80} and the standard reference element technique with Piola transformation for $0<s_K<1$. We also should notice that the interpolations and approximation properties are completely local.

Denote by $Q^k_{h}:  L^2 (\O) \mapsto D_k$ the $L^2$-projection onto $D_k$.  The following commutativity property is well-known:
\begin{eqnarray}\label{comm}
 \gradt (I^{rt,k}_{h}\,\btau)&=&Q^k_{h}\,\gradt\btau \qquad
 \forallqq\,\btau\in\Hdiv \cap H^r(\O)^d \,\mbox{ with }\, r>0, \\[2mm]
\label{comm_bdm}
 \gradt (I^{bdm,k}_{h}\,\btau)&=&Q^{k-1}_{h}\,\gradt\btau \qquad
 \forallqq\,\btau\in\Hdiv \cap H^r(\O)^d \,\mbox{ with }\, r>0.
\end{eqnarray}

\begin{rem}
The requirement $r>0$ in $\Hdiv \cap [H^r(\O)]^d$ is to make sure that the mixed interpolations are well defined. Another choice is $\{\btau\in L^p(\O)^d\mbox{  and  }\gradt \btau \in L^2(\O)\}$ for $p>2$ or $W^{1,t}(K)$ for $t>2d/(d+2)$ as in  \cite{BBF:13}.  We use the Hilbert space based choice since it is more suitable for our analysis.
\end{rem}

\subsection{Robust best approximation in the discrete equilibrated space for the flux}
Define the discrete equilibrated space
$$
\Sigma_k^f = \{\btau_h \in \Sigma_k : \gradt \btau_h =Q^k_{h} f\}.
$$
Note that $\Sigma_k^f = RT_k^f = \{\btau_h \in RT_k : \gradt \btau_h =Q^k_{h} f\}$ for the RT case and $\Sigma_k^f = BDM_{k+1}^f= \{\btau_h \in BDM_{k+1} : \gradt \btau_h =Q^k_{h} f\}$ for the BDM case.

The following theorem is almost standard in the mixed finite element analysis.
\begin{thm}\label{apriori_mixed} (Robust best approximation in the discrete equilibrated space)
Let $(\bsigma, u)$ and $(\bsigma_h,\,u_h) \in \Sigma_k \times D_k$ be the solutions of {\em (\ref{mixed})} and {\em (\ref{problem_mixed})}, respectively, then the following robust best approximation result holds:
\begin{equation}\label{rba_equ}
\|\a^{-1/2}(\bsigma -\bsigma_h)\|_{0,\O} \leq \inf_{\btau_h^f \in \Sigma_k^f} \|\a^{-1/2}(\bsigma-\btau_h^f)\|_{0,\O}.
\end{equation}
\end{thm}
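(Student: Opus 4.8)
The plan is to recognize this as the classical quasi-orthogonality argument for mixed methods, which yields a best-approximation bound with constant exactly one, so that robustness in $\a$ is automatic. First I would observe that the discrete flux $\bsigma_h$ itself lies in the equilibrated space $\Sigma_k^f$. Indeed, since $\bsigma_h \in \Sigma_k$ we have $\divv \bsigma_h \in D_k$, and the second equation of (\ref{problem_mixed}) reads $(\divv \bsigma_h, v_h) = (f, v_h)$ for all $v_h \in D_k$; because $Q^k_h$ is the $L^2$-projection onto $D_k$, this forces $\divv \bsigma_h = Q^k_h f$, i.e.\ $\bsigma_h \in \Sigma_k^f$. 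Consequently, for any competitor $\btau_h^f \in \Sigma_k^f$ the difference $\bsigma_h - \btau_h^f \in \Sigma_k$ is discretely divergence-free, $\divv(\bsigma_h - \btau_h^f) = Q^k_h f - Q^k_h f = 0$.

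The key step is a Galerkin orthogonality in the $\a^{-1}$-weighted inner product. Testing the first error equation (\ref{erroreq_mixed}) with $\btau_h = \bsigma_h - \btau_h^f \in \Sigma_k$ gives $(\a^{-1}(\bsigma - \bsigma_h),\, \bsigma_h - \btau_h^f) = (\divv(\bsigma_h - \btau_h^f),\, u - u_h)$, and the right-hand side vanishes precisely because $\bsigma_h - \btau_h^f$ is divergence-free. Thus the flux error $\bsigma - \bsigma_h$ is $\a^{-1}$-orthogonal to every field of the form $\bsigma_h - \btau_h^f$ with $\btau_h^f \in \Sigma_k^f$.

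With orthogonality in hand, I would finish by a Pythagorean splitting. For any $\btau_h^f \in \Sigma_k^f$, write $\bsigma - \btau_h^f = (\bsigma - \bsigma_h) + (\bsigma_h - \btau_h^f)$ and expand the squared weighted norm; the cross term is exactly the inner product above and drops out, leaving $\|\a^{-1/2}(\bsigma - \btau_h^f)\|_{0,\O}^2 = \|\a^{-1/2}(\bsigma - \bsigma_h)\|_{0,\O}^2 + \|\a^{-1/2}(\bsigma_h - \btau_h^f)\|_{0,\O}^2 \geq \|\a^{-1/2}(\bsigma - \bsigma_h)\|_{0,\O}^2$. Taking the infimum over $\btau_h^f \in \Sigma_k^f$ and then a square root yields (\ref{rba_equ}).

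I do not expect a genuine obstacle here; the only point requiring care is the very first observation that $\bsigma_h \in \Sigma_k^f$, since the whole argument collapses without it — this is what simultaneously makes $\bsigma_h - \btau_h^f$ an admissible test function in $\Sigma_k$ and divergence-free, so that both the orthogonality and the splitting go through. Robustness is then immediate: no constant other than $1$ appears, because the $\a$-weighting is carried uniformly through the inner product and the projection $Q^k_h$ does not depend on $\a$.
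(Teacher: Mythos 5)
Your proof is correct and takes essentially the same route as the paper's: both hinge on the observation that $\bsigma_h \in \Sigma_k^f$, so that $\bsigma_h - \btau_h^f$ is an admissible, divergence-free test function in the first error equation of (\ref{erroreq_mixed}), which kills the cross term. The only cosmetic difference is that you close with the Pythagorean identity where the paper splits $\|\a^{-1/2}(\bsigma-\bsigma_h)\|_{0,\O}^2$ and applies the Cauchy--Schwarz inequality; both give the constant $1$, and your version in fact records the slightly stronger orthogonality (equality) statement.
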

\begin{proof}
To establish (\ref{rba_equ}), denote by 
 \[
  \bE = \bsigma -\bsigma_h
   \quad\mbox{and}\quad
   e = u- u_h
 \]
the respective errors of the flux and the solution.

Now, let $\btau_h^f$ be an arbitrary function in $RT_k^f$, then it follows from the first equation in (\ref{erroreq_mixed}), the fact $\bsigma_h \in \Sigma_k^f$, and the Cauchy-Schwarz inequality that
\begin{eqnarray*}
  \|\a^{-1/2}\bE\|_{0,\O}^2
  &= & (\a^{-1}\bE,\, \bsigma-\btau_h^f) + (\a^{-1}\bE,\, \btau_h^f -\bsigma_h)\\
  &=&(\a^{-1}\bE,\, \bsigma-\btau_h^f) + (\divv (\btau_h^f-\bsigma_h),\,e)\\
  &=&(\a^{-1}\bE,\, \bsigma-\btau_h^f)
  \leq \|\a^{-1/2}\bE\|_{0,\O}\,\|\a^{-1/2}(\bsigma-\btau_h^f)\|_{0,\O},
\end{eqnarray*}
which implies the result of the theorem.
\end{proof}

\begin{thm}\label{apriori_mixed2} (Robust local a priori error estimates)
Let $(\bsigma, u)$ and $(\bsigma_h,\,u_h) \in \Sigma_k \times D_k$ $(k\geq 0)$ be the solutions of {\em (\ref{mixed})} and {\em (\ref{problem_mixed})}, respectively. Assume that $u\in H^{1+r}(\O)$ with some $r>0$ and that $u|_K\in H ^{1+s_K}(K)$ with an element-wisely defined regularity $s_K>0$ for all $K\in\cT$. Then there exists a constant $C>0$ independent $\a$ and $h$ for both the two- and three-dimension such that
\begin{eqnarray}\label{err-bound-L2RT}
\|\a^{-1/2}(\bsigma -\bsigma_h)\|_{0} &\leq& C \sum_{K\in\cT} h_K^{\min\{k+1,s_K\}} |\a^{1/2}\nabla u|_{\min\{k+1,s_K\},K},  \quad RT_k \mbox{  case} ,
	\\[2mm] \label{err-bound-L2BDM}
\|\a^{-1/2}(\bsigma -\bsigma_h)\|_{0} &\leq& C \sum_{K\in\cT} h_K^{\min\{k+2,s_K\}} |\a^{1/2}\nabla u|_{\min\{k+2,s_K\},K}, \quad BDM_{k+1} \mbox{  case}.
\end{eqnarray}
\end{thm}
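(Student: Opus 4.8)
The plan is to combine the robust best approximation result of Theorem \ref{apriori_mixed} with the completely local interpolation estimate (\ref{rti}), exploiting that the coefficient $\a$ is piecewise constant. First I would produce an admissible comparison function in the equilibrated space $\Sigma_k^f$. The natural candidate is the canonical interpolant of the exact flux, namely $\btau_h^f = I^{rt,k}_{h}\bsigma$ in the RT case and $\btau_h^f = I^{bdm,k+1}_{h}\bsigma$ in the BDM case. Since $\bsigma=-\a\nabla u$ satisfies $\divv\bsigma = f$, the commutativity relations (\ref{comm}) and (\ref{comm_bdm}) give $\divv(I^{rt,k}_{h}\bsigma)=Q^k_{h}\divv\bsigma=Q^k_{h} f$ and $\divv(I^{bdm,k+1}_{h}\bsigma)=Q^{k}_{h}\divv\bsigma=Q^k_{h} f$, so in both cases the interpolant lies in $\Sigma_k^f$. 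The global assumption $u\in H^{1+r}(\O)$ with $r>0$ guarantees $\bsigma\in\Hdiv\cap H^r(\O)^d$, so the interpolants are well defined.

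Next I would insert this choice into (\ref{rba_equ}) and pass to an elementwise estimate. On each $K\in\cT$ the coefficient $\a\equiv\a_i$ is constant, so the weight factors out of the local norm:
$$\|\a^{-1/2}(\bsigma-I^{\Sigma,k}_{h}\bsigma)\|_{0,K}=\a_i^{-1/2}\|\bsigma-I^{\Sigma,k}_{h}\bsigma\|_{0,K}.$$
Applying the local approximation property (\ref{rti}) to $\btau=\bsigma$ (valid because $u|_K\in H^{1+s_K}(K)$ gives $\nabla u|_K\in H^{s_K}(K)$, hence $\bsigma|_K\in H^{s_K}(K)$) and then using $|\bsigma|_{m,K}=\a_i|\nabla u|_{m,K}$, with $m=\min\{k+1,s_K\}$ in the RT case and $m=\min\{k+2,s_K\}$ in the BDM case (the BDM interpolation having degree $k+1$), gives
$$\|\a^{-1/2}(\bsigma-I^{\Sigma,k}_{h}\bsigma)\|_{0,K}\le \a_i^{-1/2}\,Ch_K^{m}\,\a_i\,|\nabla u|_{m,K}=Ch_K^{m}\,|\a^{1/2}\nabla u|_{m,K}.$$
The exact cancellation $\a_i^{-1/2}\cdot\a_i=\a_i^{1/2}$ is the source of the robustness: the constant $C$ is that of (\ref{rti}) and carries no dependence on $\a$.

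Finally I would sum over the mesh. Since $\|v\|_0^2=\sum_{K\in\cT}\|v\|_{0,K}^2$ and the $\ell^2$ norm is dominated by the $\ell^1$ norm, the full weighted $L^2$ norm is bounded by the sum of the local contributions; combining this with (\ref{rba_equ}) yields exactly the right-hand sides of (\ref{err-bound-L2RT}) and (\ref{err-bound-L2BDM}). I do not expect a genuine obstacle here: the whole point is that, in contrast to the conforming case with Scott--Zhang/Clement operators discussed in the introduction, the RT/BDM interpolation is element-local, so no patchwise coupling of the coefficient values, and hence no quasi-monotonicity assumption, is ever needed. The only point demanding care is the bookkeeping around the weight, namely verifying that the piecewise-constant $\a$ can be pulled through both the local norm and the (fractional) seminorm on each element; this is precisely where the robustness genuinely originates. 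A secondary check is the index matching in the commutativity diagram for BDM, where $BDM_{k+1}$ pairs with $D_k$ and the interpolation order is $k+2$ rather than $k+1$.
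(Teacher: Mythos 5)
Your proposal is correct and follows essentially the same route as the paper: choose $\btau_h^f = I^{\Sigma,k}_{h}\bsigma$, verify via the commutativity relations (\ref{comm})--(\ref{comm_bdm}) and $\divv\bsigma = f$ that it lies in $\Sigma_k^f$, and then combine the robust best approximation (\ref{rba_equ}) with the local interpolation estimate (\ref{rti}). Your write-up is in fact more explicit than the paper's terse proof, spelling out the weight cancellation $\a_i^{-1/2}\cdot\a_i=\a_i^{1/2}$ on each element and the $\ell^2\leq\ell^1$ summation step that the paper leaves implicit.
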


\begin{proof}
For the $RT_k \times D_k$ case, the commutativity property in (\ref{comm}) and the second equations in (\ref{mixed}) and (\ref{problem_mixed}) lead to
$$
 \gradt (I_h^{rt,k}\bsigma) = Q^k_{h}\,\gradt\bsigma = Q^k_{h} f = \divv
 \bsigma_h.
$$
Thus, the result is a direct consequence of the best approximation property in  (\ref{rba_equ}) and the local approximation property in (\ref{rti}) by choosing $\btau_h^f = I_h^{rt,k}\bsigma \in RT_k^f$.

Using the same argument, we can get the result for the $DBM_{k+1} \times D_k$ case. 
\end{proof}

\begin{rem}
For those elements with a low regularity $0<s_K<1$, $RT_0$ is enough and there is no need to use BDM or high order RT approximations.
\end{rem}

\begin{rem}
For the case that in each element $K\in \cT$,  the diffusion coefficient being a full symmetric positive definite constant matrix $A|_K$ instead of a scalar constant $\a_K$, from the proofs, it is clear the above robust best approximation result is also true:
$$
\|A^{-1/2}(\bsigma -\bsigma_h)\|_{0,\O} \leq \inf_{\btau_h^f \in \Sigma_k^f} \|A^{-1/2}(\bsigma-\btau_h^f)\|_{0,\O}.
$$
In each element $K\in\cT$, for the quantity $\bq \in P_k^d$, $A^{-1/2} \bq$ is also in $P_k^d$, and thus $A^{-1/2}I_h^{\Sigma,k} \bq = A^{-1/2}\bq$. Thus for piecewise constant symmetric positive definite constant matrix $A$, we have
$$
\|A^{-1/2}(\btau - I^{\Sigma,k}_{h} \btau)\|_{0,K} 
	\leq C h_K^{\min\{k+1,s_K\}} |A^{-1/2}\btau|_{\min\{k+1,s_K\},K} \quad\forall\,\, K\in \cT.
$$
And we have the robust local a priori error estimatesL
\begin{eqnarray*}
\|A^{-1/2}(\bsigma -\bsigma_h)\|_{0,\O} &\leq& C \sum_{K\in\cT} h_K^{\min\{k+1,s_K\}} |A^{1/2}\nabla u|_{\min\{k+1,s_K\},K}, \quad RT_k \mbox{  case},
	\\[2mm]
\|A^{-1/2}(\bsigma -\bsigma_h)\|_{0,\O} &\leq& C\sum_{K\in\cT} h_K^{\min\{k+2,s_K\}} |A^{1/2}\nabla u|_{\min\{k+2,s_K\},K}, \quad BDM_{k+1} \mbox{  case}.
\end{eqnarray*}
The corresponding results for discontinuous Galerkin methods are not proved, since the robustness of the DG method for the diffusion problem depends on the right choice of the weights of the averages and penalty coefficients. For the full tensor case, the right weight is not clear or probably not possible for a full matrix $A$, see \cite{CHZ:17}. For the conforming finite element approximations, due to the lack of the nodal interpolations for the low regularity cases, such robust local optimal estimates is not available. For averaging operators like the Scott-Zhang or Clement interpolations, the robustness with respect to the full tensor $A$ is also impossible since even the famous quasi-monotonicity assumption is not meaningful in the case. For the Crouzeix-Raviart non-conforming finite element approximation, it is possible we can get a similar result by using the relation between the $RT_0$ and Crouzeix-Raviart elements.
\end{rem}
%
%

\subsection{Mesh-dependent norm analysis}
In this subsection, we use mesh-dependent norm analysis to derive the robust best approximation properties for the flux and the potential in appropriate norms. Earlier analysis on the mixed methods using mesh-dependent norms can be found in Babu\v{s}ka, Osborn, and Pitk\"{a}ranta \cite{BaOsPi:80}, Braess and Verf\"{u}rth \cite{BrVe:96}, and \cite{CaZh:12}. In the mesh-dependent analysis, we need to restrict ourselves to the scalar case.

First, we discuss the averages of the coefficients on the edge/face $F\in \cE$. For $F = \p K_F^{+} \cap \p K_F^{-}\in \cE_{I}$, denote by $\a^+_{F}$ and $\a^-_{F}$ the restriction of $\a$ on the respective $K_F^{+}$ and $K_F^{-}$. Denote the harmonic averages of $\a$ on $F \in \cE$ by
\[
 \a_{F,H} = \left\{\begin{array}{cl}
  \dfrac{\a_F^+  \a_F^- }{\a_F^+ + \a_F^-},&\quad F \in \cE_{I},\\[4mm]
 \a_F^-   &\quad F \in \cE_{\sD}\cup\cE_{\sN},
 \end{array}\right.
\]
which is equivalent to the minimum of $\a$:
\beq\label{a-h}
\dfrac{1}{2}\min\{\a_F^+, \a_F^- \}\leq \a_{F,H} \leq \min\{\a_F^+, \a_F^- \} .
\eeq

\begin{lem} The bilinear form $(\gradt \btau, v)$ for $(\btau,v)\in H(\divvr;\O)\times L^2(\O)$ 
has the following representation:
\beq \label{rep}
	(\gradt\btau, v) 
 	  = -\sum_{K\in\cT} (\nabla v,\btau)_{K}  
		+ \sum_{F\in \cE_{I}} (\btau\cdot\bn, \jump{v})_F 
		+ \sum_{F\in \cE_D} (\btau\cdot\bn, v)_F 
\eeq
\end{lem}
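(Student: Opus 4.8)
The plan is to prove the integration-by-parts style representation formula \eqref{rep} by a density argument: first establish it for smooth $\btau$ (where classical integration by parts applies element by element), then extend to all of $H(\divvr;\O)$ by density, being careful that both sides are continuous in the relevant norms. Since $\gradt \btau \in L^2(\O)$ and $v \in L^2(\O)$, the left-hand side is a well-defined $L^2$ pairing; the right-hand side involves element-wise gradients and normal traces on faces, so the main care is ensuring those traces make sense and that the jump/boundary structure is correctly bookkept.

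First I would fix $\btau \in [C^\infty(\O)]^d$ (or smooth on each closed element) and $v$ smooth, and apply the divergence theorem on each element $K\in\cT$:
\begin{equation*}
(\gradt\btau, v)_K = -(\btau, \nabla v)_K + (\btau\cdot\bn_K, v)_{\p K}.
\end{equation*}
Summing over all $K\in\cT$ gives $(\gradt\btau, v) = -\sum_K (\btau,\nabla v)_K + \sum_K (\btau\cdot\bn_K, v)_{\p K}$. The crux is reorganizing the boundary sum $\sum_K (\btau\cdot\bn_K, v)_{\p K}$ into a sum over faces $F\in\cE$. Each interior face $F \in \cE_I$ is shared by exactly two elements $K_F^+$ and $K_F^-$, whose outward normals are opposite. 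Since $\btau \in H(\divvr;\O)$ has continuous normal trace across $F$, the contribution from $F$ collapses to $(\btau\cdot\bn, \jump{v})_F$ with the jump $\jump{v} = v|_{K_F^+} - v|_{K_F^-}$ (with the standard orientation convention fixing $\bn$ as the normal pointing from $K_F^+$ to $K_F^-$). Each Dirichlet boundary face $F\in\cE_D$ contributes $(\btau\cdot\bn, v)_F$ with no cancelling partner. This yields \eqref{rep} for smooth $\btau$.

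The main obstacle, and the step I would spend the most care on, is the density extension: the normal-trace terms on the right-hand side are not continuous with respect to the $L^2$ norm of $\btau$ alone, so one cannot naively pass to the limit face by face. The resolution is that one never actually needs $\btau\cdot\bn$ to converge in a strong sense on individual faces; instead, the entire right-hand side should be regrouped back into a single quantity that matches the left-hand side, or one invokes that for $\btau\in H(\divvr;\O)$ the normal trace $\btau\cdot\bn$ is well-defined in $H^{-1/2}(\p K)$ and the duality pairings on $\p K$ are continuous in the $H(\divvr;K)$ norm. Since smooth vector fields are dense in $H(\divvr;\O)$ in the graph norm $\|\btau\|_0 + \|\gradt\btau\|_0$, and both sides of \eqref{rep}, interpreted through the $H^{-1/2}$--$H^{1/2}$ trace duality, are continuous in this graph norm (the left side obviously, the right side because $v$ restricted to each $\p K$ lies in $H^{1/2}(\p K)$ by the smoothness/regularity we may assume, or by restricting to a dense subclass of $v$ and then extending in $L^2$ after the formula is fixed), the identity transfers to the limit. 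The delicate bookkeeping is purely that the normal trace continuity of $\btau$ across interior faces is exactly what converts paired element contributions into the jump term, so the formula genuinely uses $\btau\in H(\divvr;\O)$ rather than merely piecewise smoothness.
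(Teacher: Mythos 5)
Your proof is correct and takes essentially the same route as the paper, whose entire proof is the single sentence that the representation follows from integration by parts --- i.e., exactly your element-wise divergence theorem plus the regrouping of face contributions using the continuity of the normal trace across interior faces. One caveat worth noting: your closing remark about extending in $v$ to all of $L^2(\O)$ cannot work (the right-hand side involves $\nabla v$ and traces of $v$, hence is undefined for general $v\in L^2(\O)$); the identity should simply be read for $v$ in the broken $H^1$ space --- in the paper's actual usage, $v\in D_k$ or $v\in H^1_0(\O)$ --- which is an imprecision in the lemma's statement rather than a flaw in your core argument, so that part of your density discussion should be dropped rather than repaired.
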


\begin{proof} The representation (\ref{rep}) is a consequence of integration by parts.
\end{proof}
Define $(\a,\,h)$-dependent norms on $\cT$ by
\begin{eqnarray*}
 && \| \btau \|_{\a,h}^2 :=\|\alpha^{-1/2} \btau\|_{0}^2 
	+ \dsum_{F\in \cE }\frac{h_F}{\a_{F,H}} \|\btau \cdot \bn\|_{0,F}^2, 
	\quad \forall 
	\btau \in \Sigma_k
 \\[2mm]
 	\mbox{and }\,\,&&
		\tri v\tri_{\a, h}^2  
		=\|\a^{1/2} \nabla_h v\|_{0,\cT}^2 
		+ \dsum_{F\in  \cE_{I}} \dfrac{\a_{F,H}}{h_F} \|\jump{v}\|_{0,F}^2
		+ \dsum_{F\in  \cE_{D}} \dfrac{\a_{F}}{h_F} \|v\|_{0,F}^2,
		\quad \forall v \in D_k.
\end{eqnarray*}
Note that the $\tri\cdot\tri_{\a,h}$ norm is the standard $\a$-weighted DG norm used in the discontinuous Galerkin methods, see \cite{CHZ:17}. For a $v\in H_0^1(\O)$,  $\tri v\tri_{\a, h} = \|\a^{1/2}\nabla v\|_{0,\O}$.

\begin{lem} \label{lem_hnorm}
For all $\btau \in \Sigma_k(K)$, there exists a positive constant $C>0$ independent of $\a$ and $h$, such that
\[
 \dsum_{F\in \cE_K}\frac{h_F}{\a_K} \|\btau \cdot \bn\|_{0,F}^2 \leq C \|\a^{-1/2}\btau\|_{0,K}^2.
\]
\end{lem}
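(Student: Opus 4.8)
The plan is to exploit that $\a$ is constant on each element, so that $\a_K$ cancels from both sides and the claim reduces to the purely geometric, scale-invariant inequality
\[
\dsum_{F\in\cE_K} h_F\,\|\btau\cdot\bn\|_{0,F}^2 \le C\,\|\btau\|_{0,K}^2,
\qquad \btau\in\Sigma_k(K).
\]
Here I would first record that $\btau|_K$ is a polynomial vector field of degree at most $k+1$ (since $P_k(K)^d+\bx P_k(K)\subset P_{k+1}(K)^d$ in the RT case, and $\btau|_K\in P_k(K)^d$ in the BDM case), and that on each flat face $F$ the outward unit normal $\bn=\bn_F$ is a constant vector. Consequently $p_F:=\btau\cdot\bn_F$ is a single scalar polynomial on $K$ of degree at most $k+1$, and $|p_F|\le|\btau|$ pointwise because $|\bn_F|=1$; hence $\|p_F\|_{0,K}\le\|\btau\|_{0,K}$. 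This turns the vector statement into a scalar discrete trace (inverse) inequality for $p_F$.

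Next I would establish the scalar inverse trace inequality
\[
\|p\|_{0,F}^2 \le C_{\mathrm{tr}}\, h_F^{-1}\,\|p\|_{0,K}^2
\]
for every $p\in P_{k+1}(K)$ and every face $F\in\cE_K$, by the standard reference-element argument. Let $\Phi:\hat K\to K$, $\Phi(\hat x)=B\hat x+b$, be an affine bijection from a fixed reference simplex $\hat K$, mapping a reference face $\hat F$ onto $F$. On $\hat K$ the space $P_{k+1}(\hat K)$ is finite-dimensional, so the two norms $\hat p\mapsto\|\hat p\|_{0,\hat F}$ and $\hat p\mapsto\|\hat p\|_{0,\hat K}$ are equivalent; this gives a constant $\hat C=\hat C(d,k,\hat K)$ with $\|\hat p\|_{0,\hat F}^2\le\hat C\,\|\hat p\|_{0,\hat K}^2$ for all $\hat p\in P_{k+1}(\hat K)$. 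Applying this to the pullback $\hat p=p\circ\Phi$ and using the change-of-variables relations $\|p\|_{0,K}^2=|\det B|\,\|\hat p\|_{0,\hat K}^2$ and $\|p\|_{0,F}^2=\frac{|F|}{|\hat F|}\,\|\hat p\|_{0,\hat F}^2$ yields $\|p\|_{0,F}^2\le C\,\frac{|F|}{|K|}\,\|p\|_{0,K}^2$, and shape regularity $|F|/|K|\simeq h_K^{-1}\simeq h_F^{-1}$ gives the displayed bound with $C_{\mathrm{tr}}$ depending only on $d$, $k$, and the shape-regularity parameter.

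Finally I would combine the two ingredients: for each face, $h_F\|p_F\|_{0,F}^2\le C_{\mathrm{tr}}\|p_F\|_{0,K}^2\le C_{\mathrm{tr}}\|\btau\|_{0,K}^2$, and summing over the (at most $d+1$) faces in $\cE_K$ multiplies the constant by $d+1$, giving the scale-invariant inequality above; reinstating the common factor $1/\a_K$ on both sides proves the lemma with $C$ independent of $\a$ and $h$. The only step requiring care is the scaling bookkeeping in the trace inequality—keeping track of the powers of $|\det B|$ and of $h_F$ versus $h_K$—but this is routine once shape regularity is invoked. There is no genuine analytic obstacle, because everything lives in a fixed finite-dimensional polynomial space and $\a_K$ is a constant that cancels exactly.
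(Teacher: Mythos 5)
Your proof is correct and is precisely the argument the paper compresses into one line (``the standard scaling argument and the fact that $RT_k(K)$ and $BDM_{k+1}(K)$ are finite dimensional''): cancel the constant $\a_K$, reduce to a discrete trace inequality for the polynomial $\btau\cdot\bn_F$ via norm equivalence on the reference element, and track the scaling factors $|F|/|K|\simeq h_F^{-1}$ under shape regularity. One cosmetic remark: in the paper's notation $\Sigma_k$ denotes $RT_k$ or $BDM_{k+1}$, so in the BDM case the local space is $P_{k+1}(K)^d$ rather than $P_k(K)^d$, but this does not affect your argument since the degree bound $k+1$ you use covers both cases.
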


\begin{proof}
The lemma is a simple consequence of the standard scaling argument and the fact that both $RT_k(K)$ and $BDM_{k+1}(K)$ are finite dimensional.
\end{proof}

\begin{thm} \label{thm_hnorm}
The following norm equivalence holds with $C>0$ independent of $\a$ and $h$:
\beq \label{norm_equ}
	\|\a^{-1/2}\btau_h\|_0 \leq \|\btau_h\|_{\a,h} 
	\leq C \|\a^{-1/2}\btau_h\|_0, \quad \forall \btau_h \in \Sigma_k.
\eeq
\end{thm}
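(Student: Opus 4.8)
The plan is to prove the two inequalities in (\ref{norm_equ}) separately, the lower bound being immediate and the upper bound resting on Lemma \ref{lem_hnorm} together with the harmonic-average equivalence (\ref{a-h}). For the lower bound, I simply observe that
\[
 \|\btau_h\|_{\a,h}^2 = \|\a^{-1/2}\btau_h\|_0^2 + \sum_{F\in\cE}\frac{h_F}{\a_{F,H}}\|\btau_h\cdot\bn\|_{0,F}^2,
\]
and since the face sum is nonnegative, the left inequality $\|\a^{-1/2}\btau_h\|_0\le\|\btau_h\|_{\a,h}$ follows at once.

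For the upper bound it suffices to dominate the face sum by $C\|\a^{-1/2}\btau_h\|_0^2$. The key observation, and the source of robustness in $\a$, is that by (\ref{a-h}) the harmonic average obeys $\a_{F,H}\ge \tfrac12\min\{\a_F^+,\a_F^-\}$. For each face $F$ I would therefore select the adjacent element $K(F)$ on which $\a$ attains the smaller of its two values, so that $\a_{K(F)}=\min\{\a_F^+,\a_F^-\}$ and hence
\[
 \frac{h_F}{\a_{F,H}} \le \frac{2\,h_F}{\a_{K(F)}}.
\]
Because $\btau_h\in\Sigma_k\subset H(\divvr;\O)$, the normal trace $\btau_h\cdot\bn$ is single-valued across interior faces, so $\|\btau_h\cdot\bn\|_{0,F}$ is unambiguous and this replacement is legitimate (on boundary faces $\a_{F,H}=\a_F^-=\a_{K(F)}$, so the bound holds a fortiori).

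Next I would regroup by elements. Each face $F$ is counted once with its minimizing element $K(F)$, and all terms are nonnegative, so the single term $\tfrac{h_F}{\a_{K(F)}}\|\btau_h\cdot\bn\|_{0,F}^2$ is dominated by the two element-face contributions of $F$ in the full sum over all pairs $(K,F)$ with $F\in\cE_K$. This yields
\[
 \sum_{F\in\cE}\frac{h_F}{\a_{F,H}}\|\btau_h\cdot\bn\|_{0,F}^2
  \le 2\sum_{K\in\cT}\sum_{F\in\cE_K}\frac{h_F}{\a_K}\|\btau_h\cdot\bn\|_{0,F}^2.
\]
Applying Lemma \ref{lem_hnorm} on each $K$ bounds the inner sum by $C\|\a^{-1/2}\btau_h\|_{0,K}^2$; summing over $K\in\cT$ gives $C\|\a^{-1/2}\btau_h\|_0^2$, and recombining with the volume term produces $\|\btau_h\|_{\a,h}\le C\|\a^{-1/2}\btau_h\|_0$ with $\a$- and $h$-independent $C$.

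The only delicate point is robustness: one must avoid any constant depending on the jumps $\a_F^+/\a_F^-$. This is handled precisely by the harmonic-average lower bound (\ref{a-h}), which lets the weight $h_F/\a_{F,H}$ be absorbed by the element on the weaker-coefficient side, where Lemma \ref{lem_hnorm}---proved by scaling on a fixed reference element with the single coefficient $\a_K$---applies with an $\a$-independent constant. Had the weight instead used the arithmetic mean or the maximum of $\a_F^\pm$, the constant would degrade with the coefficient contrast, so the choice of the harmonic average is essential.
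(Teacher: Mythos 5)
Your proof is correct and takes essentially the same approach as the paper: both bounds reduce the face sum to the element-wise scaling estimate of Lemma \ref{lem_hnorm}. The only difference is cosmetic --- the paper distributes each interior face weight exactly over its two neighbouring elements via the identity $1/\a_{F,H}=1/\a_F^{+}+1/\a_F^{-}$, whereas you assign each face to its weaker-coefficient neighbour using the bound (\ref{a-h}), at the cost of a harmless factor of $2$.
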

\begin{proof}
Since for the harmonic average $\a_{F,H}$, we have $1/\a_{F,H} = 1/\a_{F}^+ +1/\a_{F}^-$,  by Lemma \ref{lem_hnorm}, we immediately get the robust discrete norm equivalence.
\end{proof}
For $\btau \in H(\divvr;\O)$, define the following $\a$ and $h$ dependent norm:
\beq
	\|\btau\|_{\a,h,H(\divvr)}:= \left(
	\|\a^{-1/2} \btau\|_0^2 + \sum_{K\in\cT}h_K^2\|\a^{-1/2}\gradt \btau\|_{0,K}^2 
	 \right)^{1/2}.
\eeq
We also use $\|\btau\|_{\a,h,H(\divvr),K}$ to denote the norm on a single element $K$.

The following trace inequality can be found in Lemma 2.4 and Remark 2.5 of \cite{CHZ:17}.
\begin{lem} Let $F$ be an edge/face of $K\in\cT$ and $\bn_F$ the unit vector normal to $F$. Assume that $\btau$ is a given function in $H(\divvr;K)\cap [H^r(K)]^d$, $r>0$ then for any $w_h\in P_k(K)$, we have 
\begin{eqnarray}\label{tracecombined}
 (\btau\cdot\bn, w_h)_F 
  &\leq & C\, h_F^{-1/2}\|w_h\|_{0,F}
 \left(\|\btau\|_{0,K} + h_K\|\gradt \btau\|_{0,K}\right).
\end{eqnarray}
\end{lem}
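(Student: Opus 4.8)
The plan is to avoid estimating the normal trace $\btau\cdot\bn$ directly in $L^2(F)$---which is not controlled robustly for merely $H(\divvr)$-type functions, since the normal trace of an $H(\divvr)$ field lives only in $H^{-1/2}$---and instead to transfer all of the roughness of $\btau$ onto its controllable volume norms $\|\btau\|_{0,K}$ and $\|\gradt\btau\|_{0,K}$ by integrating by parts against a carefully scaled extension of the polynomial $w_h$.

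First I would construct a local extension operator $E_F$ acting on the restriction of $P_k(K)$ to $F$, with values in $H^1(K)$, such that $E_F(w_h)=w_h$ on $F$, $E_F(w_h)=0$ on $\partial K\setminus F$, together with the scaling bounds
$$\|E_F(w_h)\|_{0,K}\le C\,h_F^{1/2}\|w_h\|_{0,F},\qquad \|\nabla E_F(w_h)\|_{0,K}\le C\,h_F^{-1/2}\|w_h\|_{0,F}.$$
Such an operator is built on the reference element, where the face polynomial is extended off the reference face by freezing it in the transverse direction and multiplying by a fixed cutoff that equals $1$ on the reference face and vanishes on the remaining faces; on the reference element the two bounds are just the equivalence of norms on the finite-dimensional space of polynomials, so this step is routine. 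The correct powers $h_F^{1/2}$ and $h_F^{-1/2}$ then follow from the affine scaling between $K$ and the reference element together with shape regularity, which also supplies $h_F\simeq h_K$.

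Next, since $E_F(w_h)\in H^1(K)$ and $\btau\in H(\divvr;K)$, Green's formula gives
$$\int_{\partial K}(\btau\cdot\bn)\,E_F(w_h)\,ds=\int_K (\gradt\btau)\,E_F(w_h)\,dx+\int_K \btau\cdot\nabla E_F(w_h)\,dx.$$
Because $E_F(w_h)$ vanishes on every face other than $F$ and coincides with $w_h$ on $F$, the boundary integral collapses to the single pairing $(\btau\cdot\bn,w_h)_F$; here the hypothesis $\btau\in[H^r(K)]^d$ with $r>0$ is precisely what makes the normal trace $\btau\cdot\bn$ a genuine $L^2(F)$ function, so the left-hand side of the lemma is well defined and equals this localized integral. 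Applying the Cauchy--Schwarz inequality to the two volume integrals and inserting the scaling bounds for $E_F(w_h)$ yields
$$(\btau\cdot\bn,w_h)_F\le C\,h_F^{1/2}\|w_h\|_{0,F}\,\|\gradt\btau\|_{0,K}+C\,h_F^{-1/2}\|w_h\|_{0,F}\,\|\btau\|_{0,K},$$
and factoring out $C\,h_F^{-1/2}\|w_h\|_{0,F}$ together with $h_F\simeq h_K$ produces the claimed estimate.

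I expect the main obstacle to be the first step: producing the extension operator $E_F$ with the exact $h_F^{1/2}$ and $h_F^{-1/2}$ scalings while keeping it identically zero on $\partial K\setminus F$, because the naive polynomial extension of $w_h$ into $K$ need not vanish on the other faces. One must commit to a fixed reference construction (tensoring the face polynomial with a transverse cutoff, or a face-bubble--weighted lifting) and verify that the two scalings are uniform over the shape-regular family. A secondary technical point is justifying that the $H(\divvr)$--$H^1$ Green's identity localizes to $F$; this is exactly where the $r>0$ regularity enters, and I would invoke the trace theorem for $[H^r(K)]^d$ to place $\btau\cdot\bn$ in $L^2(F)$ so that the boundary term is a bona fide face integral rather than merely a duality pairing.
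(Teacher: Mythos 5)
The paper itself offers no proof of this lemma: it is imported verbatim from Lemma~2.4 and Remark~2.5 of \cite{CHZ:17}, so your proposal has to be judged on its own terms. On those terms it has a fatal gap at its first and central step: the extension operator $E_F$ you postulate does not exist. If $E_F(w_h)\in H^1(K)$ were equal to $w_h$ on $F$ and to $0$ on $\p K\setminus F$, its boundary trace would be the extension of $w_h|_F$ by zero to all of $\p K$; unless $w_h$ happens to vanish on $\p F$, that function jumps across $\p F$ and hence does not belong to $H^{1/2}(\p K)$, so it is the trace of no $H^1(K)$ function. The same obstruction kills the reference construction directly: a cutoff $\chi$ equal to $1$ on $\hat F$ and $0$ on the remaining faces must, near a vertex (in 2D; an edge in 3D) where $\hat F$ meets an adjacent face, behave like the angular coordinate, so $|\nabla\chi|\sim \rho^{-1}$ and $\int_0^1\rho^{-2}\,\rho\,d\rho$ diverges logarithmically, i.e.\ $\chi\notin H^1(\hat K)$. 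The appeal to ``equivalence of norms on the finite-dimensional space of polynomials'' is inapplicable here: the frozen face polynomial multiplied by the cutoff is not a polynomial, does not lie in any fixed finite-dimensional space, and, as just shown, is not even in $H^1(\hat K)$.

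This is not a repairable technicality, because your argument, if it worked, would prove a false statement. You use the hypothesis $r>0$ only qualitatively (``to make the trace well defined''), so your proof would deliver (\ref{tracecombined}) with a constant depending only on shape regularity and $k$, uniformly in $r$. Now take $d=2$, put the origin at a vertex of $K$ that is an endpoint of $F$, let $\omega$ be the interior angle, align polar coordinates $(\rho,\vartheta)$ so that $F$ lies on $\vartheta=0$, and set $\btau_\lambda=\nabla\bigl(\rho^{\lambda}\sin(\lambda\vartheta)\bigr)$ for small $\lambda>0$. Then $\gradt\btau_\lambda=0$, $\btau_\lambda\in[H^r(K)]^d$ for every $r<\lambda$ (so the hypotheses of the lemma hold, e.g.\ with $r=\lambda/2$), and $\|\btau_\lambda\|_{0,K}\leq C\sqrt{\lambda}\to 0$ as $\lambda\to 0^{+}$, while with $w_h\equiv 1$ one computes $\btau_\lambda\cdot\bn=-\lambda\rho^{\lambda-1}$ on $F$ and $(\btau_\lambda\cdot\bn,1)_F=-|F|^{\lambda}$, which stays bounded away from zero. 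Hence no $r$-independent constant can work: the constant in (\ref{tracecombined}) necessarily deteriorates as $r\to 0^{+}$, and any correct proof must use the $H^r$ regularity of $\btau$ in an essential, quantitative way --- exactly what your argument (and any argument based on an $H^1$ lifting of $w_h$ localized to $F$) cannot do. The standard face-bubble substitute does not rescue the plan either: replacing $w_h$ by $b_Fw_h$ (with $b_F$ the face bubble, which does vanish on $\p F$, so its zero-extension is an admissible trace) changes the test function, and passing from $(\btau\cdot\bn,b_Fw_h)_F$ back to $(\btau\cdot\bn,w_h)_F$ would require $L^2(F)$ control of $\btau\cdot\bn$, which is precisely what is unavailable. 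For a valid argument you should consult the cited proof in \cite{CHZ:17}.
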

The following two continuity results are true. 
\begin{lem}
The following continuity results hold with constants $C_{con,1}>0$ and $C_{con,2}>0$ independent of $\a$ and $h$:
\begin{eqnarray} \label{con1}
(\gradt \btau_h,v) 
&\leq& C_{con,1}\|\a^{-1/2}\btau_h\|_0\tri v\tri_{\a,h}
, \quad
\forall \btau_h \in \Sigma_k, \quad v \in H^1_0(\O)
 \mbox{  or  } v\in D_k,\\[2mm]
 \label{cont_hDiv}
(\gradt \btau,v_h) 
&\leq & C_{con,2} \|\btau\|_{\a,h,H(\divvr)}\tri v_h\tri_{\a,h}, \quad
\forall \btau \in H(\divvr;\O)\cap [H^r(\O)]^d, \quad v \in D_k.
\end{eqnarray}
\end{lem}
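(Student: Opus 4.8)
The plan is to expand $(\gradt\btau,v)$ by the integration-by-parts representation (\ref{rep}) and then bound the volume term and the face terms separately, choosing in each Cauchy--Schwarz splitting the weights so that the factor attached to the flux reproduces its target norm and the factor attached to the potential reproduces $\tri\cdot\tri_{\a,h}$. The volume term is common to both estimates: writing $(\nabla v,\btau)_K=(\a^{1/2}\nabla v,\a^{-1/2}\btau)_K$ and applying Cauchy--Schwarz gives
\[
\Big|\sum_{K\in\cT}(\nabla v,\btau)_K\Big|\le \|\a^{1/2}\nabla_h v\|_{0}\,\|\a^{-1/2}\btau\|_{0}\le \tri v\tri_{\a,h}\,\|\btau\|_{\a,h,H(\divvr)},
\]
and for (\ref{con1}) the last factor is simply $\|\a^{-1/2}\btau_h\|_0$. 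So only the face terms require work.

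For (\ref{con1}) the flux is discrete, which lets me use the robust discrete norm equivalence of Theorem \ref{thm_hnorm}. On each interior face I split $(\btau_h\cdot\bn,\jump{v})_F=\big((h_F/\a_{F,H})^{1/2}\btau_h\cdot\bn,\ (\a_{F,H}/h_F)^{1/2}\jump{v}\big)_F$, apply Cauchy--Schwarz on $F$ and then over $\cE_{I}$, and recognise the flux factor $\big(\sum_F \tfrac{h_F}{\a_{F,H}}\|\btau_h\cdot\bn\|_{0,F}^2\big)^{1/2}$ as part of $\|\btau_h\|_{\a,h}$, which by (\ref{norm_equ}) is $\le C\|\a^{-1/2}\btau_h\|_0$; the potential factor is part of $\tri v\tri_{\a,h}$. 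The Dirichlet faces are identical (there $\a_{F,H}=\a_F$). When $v\in H_0^1(\O)$ all jumps and Dirichlet traces vanish and only the volume bound remains.

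For (\ref{cont_hDiv}) the flux is no longer discrete, so Theorem \ref{thm_hnorm} is unavailable and the face terms must instead be carried by the trace inequality (\ref{tracecombined}). On an interior face $F=\p K_F^+\cap\p K_F^-$ I apply (\ref{tracecombined}) with $w_h=\jump{v_h}$ but evaluated on the element $K\in\{K_F^+,K_F^-\}$ where $\a$ is \emph{smaller}, giving $(\btau\cdot\bn,\jump{v_h})_F\le C h_F^{-1/2}\|\jump{v_h}\|_{0,F}\big(\|\btau\|_{0,K}+h_K\|\gradt\btau\|_{0,K}\big)$. Inserting $\a_{F,H}^{\pm1/2}$ and using the harmonic-average equivalence (\ref{a-h}) to identify $\a_{F,H}^{-1}$ with $1/\a_K$ on this minimal side, I obtain $\a_{F,H}^{-1}\big(\|\btau\|_{0,K}^2+h_K^2\|\gradt\btau\|_{0,K}^2\big)\le C\big(\|\a^{-1/2}\btau\|_{0,K}^2+h_K^2\|\a^{-1/2}\gradt\btau\|_{0,K}^2\big)$. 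A Cauchy--Schwarz over $\cE_{I}$ then bounds the flux factor by $\|\btau\|_{\a,h,H(\divvr)}$ (each element abuts only boundedly many faces, so the elementwise contributions are summed a bounded number of times) and the potential factor by $\tri v_h\tri_{\a,h}$; Dirichlet faces are analogous.

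The main obstacle is precisely this coefficient weighting in (\ref{cont_hDiv}). If the trace inequality were applied on the \emph{larger}-coefficient side of an interface, the factor $\a_{F,H}^{-1}\|\btau\|_{0,K}^2$ would carry the ratio $\a_K/\a_{F,H}\approx\max\{\a_F^+,\a_F^-\}/\min\{\a_F^+,\a_F^-\}$, which blows up with the jump and destroys robustness. The equivalence (\ref{a-h}), which says $\a_{F,H}\approx\min\{\a_F^+,\a_F^-\}$, is what guarantees that choosing the minimal-coefficient element makes $\a_{F,H}^{-1}\approx 1/\a_K$ and hence yields an $\a$-independent constant; everything else (the finite-overlap bookkeeping and the Dirichlet-face variants) is routine.
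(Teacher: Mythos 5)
Your proposal is correct and follows essentially the same route as the paper: the volume term by $\a$-weighted Cauchy--Schwarz, (\ref{con1}) via the norm definitions together with the discrete equivalence (\ref{norm_equ}), and (\ref{cont_hDiv}) via the trace inequality (\ref{tracecombined}) applied on the smaller-coefficient element combined with the harmonic-average equivalence (\ref{a-h}). Your closing remark about why the minimal-coefficient side must be chosen is exactly the paper's key robustness point, and your finite-overlap bookkeeping merely fills in a step the paper leaves implicit.
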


\begin{proof}
The continuity (\ref{con1}) is clear from the representation (\ref{rep}), Cauchy-Schwarz inequality, 
the definition of norms $\|\btau\|_{\a,h}$ and $\tri v\tri_{\a,h}$, and the robust norm equivalent result (\ref{norm_equ}).

To show (\ref{cont_hDiv}), we still start from the representation (\ref{rep}):
$$ 
 (\gradt\btau, v_h) 
 	  = -\sum_{K\in\cT} (\nabla v_h,\btau)_{K}  
	+ \sum_{F\in \cE_{I}} (\btau\cdot\bn, \jump{v_h})_F 
	+ \sum_{F\in \cE_D} (\btau\cdot\bn, v_h)_F. 
$$
For the term $(\btau\cdot\bn, \jump{v_h})_F$, where $F\in\cE_I$, by (\ref{tracecombined}), 
\begin{eqnarray*}
  (\btau\cdot\bn,\,\, \jump{v_h})_F
  &\leq & C\, h_F^{-1/2}\|\jump{v_h}\|_{0,F}
 \left(\|\btau\|_{0,K} + h_K\|\gradt \btau\|_{0,K}\right),
\end{eqnarray*}
where $K$ is one of the elements having $F$ as an edge/face.
Choosing $K$ to be the element with the smaller $\a_K$. 
From (\ref{a-h}), the smaller $\a_K$ is equivalent to the harmonic average $\a_{F,H}$, then
\begin{eqnarray*}
  (\btau\cdot\bn,\,\, \jump{v_h})_F
  &\leq & C\, \a_{F,H}^{1/2}h_F^{-1/2}\|\jump{v_h}\|_{0,F}
 \left(\|\a^{-1/2}\btau\|_{0,K} + h_K\|\a^{-1/2}\gradt \btau\|_{0,K}\right).
\end{eqnarray*}
The term $(\btau\cdot\bn, v_h)_F$, $F\in\cE_D$, can be handled similarly. 
Then by the Cauchy-Schwarz inequality, (\ref{cont_hDiv})  can be easily proved.
\end{proof}

\begin{lem}
The following discrete inf-sup condition
\beq \label{infsup}
	\sup_{\btau_h \in \Sigma_k} \dfrac{(\gradt \btau_h,v_h)}{ \|\a^{-1/2}\btau_h\|_0} 
	\geq \beta \tri v_h \tri_{\a, h} \quad \forall\, v_h \in D_k
\eeq
holds with a constant $\beta>0$ independent of $\a$ and $h$.
\end{lem}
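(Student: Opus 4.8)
The plan is to prove (\ref{infsup}) by an explicit construction: for each $v_h\in D_k$ I will build a single test function $\btau_h\in\Sigma_k$ that simultaneously realizes all three terms of $\tri v_h\tri_{\a,h}^2$ through the representation (\ref{rep}), while keeping $\|\a^{-1/2}\btau_h\|_0$ controlled by $\tri v_h\tri_{\a,h}$. I specify $\btau_h$ through the canonical degrees of freedom of $RT_k$ (resp. $BDM_{k+1}$). On each face I prescribe the normal trace
$$
\btau_h\cdot\bn|_F=\frac{\a_{F,H}}{h_F}\jump{v_h}\ \ (F\in\cE_I),\qquad \btau_h\cdot\bn|_F=\frac{\a_F}{h_F}\,v_h\ \ (F\in\cE_D),
$$
which is legitimate since $\jump{v_h}|_F$ and $v_h|_F$ lie in the face normal-trace polynomial space, and I fix the interior moments so that the $L^2(K)$-projection $\Pi_K\btau_h$ of $\btau_h$ onto the interior test space equals $-\a_K\nabla v_h$.

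I then substitute this $\btau_h$ into (\ref{rep}). Because $\nabla v_h\in P_{k-1}(K)^d$ is contained in the interior test space (for $RT_k$ this space is $P_{k-1}(K)^d$; for $BDM_{k+1}$ the interior moments run over $\mathbf{N}_k(K)\supseteq P_{k-1}(K)^d$, which is precisely why the $BDM$ index shift $BDM_{k+1}$ causes no trouble), the volume term yields $-(\nabla v_h,\btau_h)_K=\|\a^{1/2}\nabla v_h\|_{0,K}^2$, while the face terms yield $\frac{\a_{F,H}}{h_F}\|\jump{v_h}\|_{0,F}^2$ and $\frac{\a_F}{h_F}\|v_h\|_{0,F}^2$. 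Hence $(\gradt\btau_h,v_h)=\tri v_h\tri_{\a,h}^2$ exactly, with no constant lost.

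The main work, and the step I expect to be the real obstacle, is the upper bound $\|\a^{-1/2}\btau_h\|_0\le C\tri v_h\tri_{\a,h}$ with $C$ independent of $\a$ and $h$. On each $K$ I will use the reconstruction counterpart of Lemma \ref{lem_hnorm}: by mapping to the reference element, unisolvence of the degrees of freedom, and the Piola scaling,
$$
\|\btau_h\|_{0,K}^2\le C\Big(h_K\sum_{F\subset\p K}\|\btau_h\cdot\bn\|_{0,F}^2+\|\Pi_K\btau_h\|_{0,K}^2\Big).
$$
Substituting the prescribed data and dividing by $\a_K$, the interior part gives $\|\a^{1/2}\nabla v_h\|_{0,K}^2$ exactly, while a face term becomes $\frac{\a_{F,H}}{\a_K}\,\frac{\a_{F,H}}{h_F}\|\jump{v_h}\|_{0,F}^2$ after using shape regularity ($h_K\simeq h_F$). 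The decisive point for robustness is the harmonic-average bound (\ref{a-h}), which gives $\a_{F,H}\le\min\{\a_F^+,\a_F^-\}\le\a_K$ for either element sharing $F$, so $\a_{F,H}/\a_K\le1$ and the face term is dominated by $\frac{\a_{F,H}}{h_F}\|\jump{v_h}\|_{0,F}^2$ regardless of the size of the jump in $\a$; the Dirichlet faces are handled identically.

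Summing over all elements (each interior face being counted from its two neighbours) then gives $\|\a^{-1/2}\btau_h\|_0^2\le C\tri v_h\tri_{\a,h}^2$. Combining the two bounds,
$$
\sup_{\btau_h\in\Sigma_k}\frac{(\gradt\btau_h,v_h)}{\|\a^{-1/2}\btau_h\|_0}\ \ge\ \frac{\tri v_h\tri_{\a,h}^2}{\|\a^{-1/2}\btau_h\|_0}\ \ge\ \frac1C\,\tri v_h\tri_{\a,h},
$$
so (\ref{infsup}) holds with $\beta=1/C$. I expect the only delicate bookkeeping to be the tracking of $h$-powers in the reconstruction inequality together with the careful use of (\ref{a-h}) to keep the constant $\a$-independent; the construction itself and the lower bound are exact and cause no difficulty.
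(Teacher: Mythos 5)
Your proposal is correct and follows essentially the same route as the paper's proof: the identical test-function construction (normal trace $\frac{\a_{F,H}}{h_F}\jump{v_h}$ on interior faces, $\frac{\a_F}{h_F}v_h$ on Dirichlet faces, interior moments matched to $-\a_K\nabla v_h$), the same exact identity $(\gradt\btau_h,v_h)=\tri v_h\tri_{\a,h}^2$ via (\ref{rep}), and the same element-wise scaling bound combined with the harmonic-average inequality (\ref{a-h}) to make the stability constant independent of $\a$ and $h$. The only (cosmetic) differences are that you treat $BDM_{k+1}$ directly through its own degrees of freedom rather than via the inclusion $RT_k\subset BDM_{k+1}$, and you bound $\|\a^{-1/2}\btau_h\|_0$ directly instead of passing through the norm equivalence (\ref{norm_equ}); in fact your full specification of the interior moments is slightly more careful than the paper's condition (which tests only against $\nabla q$, $q\in P_{k-1}(K)$, and needs $q\in P_k(K)$ for the volume identity to follow verbatim).
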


\begin{proof}
By the robust norm equivalent result (\ref{norm_equ}), we only need to prove the result for $\btau_h$ in the norm $\|\btau_h\|_{\a,h}$. Since $RT_k \subset BDM_{k+1}$, thus
$$
\sup_{\btau \in BDM_{k+1}} \dfrac{(\gradt \btau_h,v_h)}{ \|\btau_h\|_{\a,h}} 
\geq
\sup_{\btau \in RT_k} \dfrac{(\gradt \btau_h,v_h)}{ \|\btau\|_{\a,h}}, \quad  \forall\, v \in D_k,
$$
we only need to prove the RT version.
 
Choose a $\tilde{\btau}_h\in RT_k$ such that
\[
(\tilde{\btau}_h,\nabla q)_K = -(\a \nabla v, \nabla q)_K \quad \forall\,  q\in P_{k-1}(K)
\quad\forall\,\, K\in\cT
\]
and that  
\beq\label{n-bc}
\tilde{\btau}_h \cdot \bn |_F
 	=\left\{\begin{array}{llll}
 		\dfrac{\a_{F,H}}{h_F}\jump{v} 
	& \,\, F\in \cE_{I}, \\[3mm]
 	 	\dfrac{\a_F}{ h_F} v 	& \,\, F\in \cE_D,
	 \end{array}\right.
\eeq
which, together with (\ref{rep}), gives
\beq\label{5.10}
	(\gradt \tilde{\btau}_h,v_h)= \tri v \tri_{\a,h}^2.
\eeq

For every $K\in\cT$, by the standard scaling argument, there exists a constant $C>0$ independent of $\a$ and the mesh size such that
\[
\|\tilde{\btau}_h\|_{0,K}^2 \leq C \left (
\|\a_K \nabla v \|_{0,K}^2 + h_K \sum_{F\in \cE_K\cap\cE_{I}} 
\|\dfrac{\a_{F,H}}{h_F}\jump{v}\|_{0,F}^2
+h_K \sum_{F\in \cE_K\cap\cE_{D}} 
\|\dfrac{\a_{F}}{h_F} v\|_{0,F}^2
\right),
\]
which, together with (\ref{a-h}), gives
\[
\|\a_K^{-1/2}\tilde{\btau}_h\|_{0,K}^2 \leq C \left (
\|\a_K^{1/2} \nabla v \|_{0,K}^2 +\sum_{F\in \cE_K\cap\cE_{I}} \dfrac{\a_{F,H}}{h_F}
\|\jump{v}\|_{0,F}^2
+ \sum_{F\in \cE_K\cap\cE_{D}} 
\dfrac{\a_{F}}{h_F} \|v\|_{0,F}^2
\right),
\]
Hence, there exists a constant $\tilde{C}>0$ independent of $\a$ and $h$ such that
\[
	\|\tilde{\btau}_h\|_{\a,h} \leq \tilde{C} \tri v \tri_{\a,h}. 
\]
which, together with (\ref{5.10}), leads to the discrete inf-sup condition of the lemma.
\end{proof}

Define the following discrete divergence-free subspace of $\Sigma_k$:
$$
	\Sigma_k^0 =\{\btau_h \in \Sigma_k : \gradt \btau_h =0\}.
$$
Its orthogonal complement is 
$$
(\Sigma_k^0)^\perp =\{\btau_h \in \Sigma_k : (\btau_h, \brho_h) =0, \forall \brho_h \in \Sigma_k^0\}.
$$
Note that the inf-sup condition (\ref{infsup}) is also equivalent to the following inf-sup condition with $\beta>0$ independent of $\a$ and $h$:
\beq \label{infsup2}
	\sup_{v_h\in D_k} \dfrac{(\gradt \btau_h,v_h)}{ \tri v_h\tri_{\a,h}} 
	\geq \beta \|\btau_h\|_{\a,h} \geq \beta \|\a^{-1/2}\btau_h\|_0 \quad \forall\, \btau_h \in (\Sigma_k^0)^\perp.
\eeq
The condition (\ref{infsup}) also  guarantees that for each $g\in L^2(\O)$, there exists a unique solution $\btau_h \in (\Sigma_k^0)^\perp$ such that
\beq \label{orth}
	(\gradt \btau_h, v_h) = (g, v_h), \quad \forall v_h \in D_k.
\eeq
Now let us prove the following robust best approximation property for $\tri u-u_h\tri_{\a,h}$. 
\begin{thm} (Robust best approximation in the weighted discrete $H^1$ norm)
Let $(\bsigma, u)$ and $(\bsigma_h,\,u_h)\in \Sigma_k\times D_k$ 
be the solutions of 
{\em (\ref{mixed})} and {\em (\ref{problem_mixed})}, respectively.
Assume that $u\in H^{1+r}(\O)$ with $r>0$ and that $u|_K\in H ^{1+s_K}(K)$ with element-wisely defined $s_K>0$ for all $K\in\cT$. Then there exists a constant $C>0$ independent of $\a$ and $h$ for both the two- and three-dimension such that
\beq \label{aprioriu}
\tri u-u_h\tri_{\a,h} 
	\leq C\left( \inf_{\btau_h^f\in\Sigma_k^f} \|\a^{-1/2}(\bsigma-\btau_h^f)\|_{0,\O}+\inf_{v_h \in D_k}\tri u -v_h\tri_{\a,h}\right). 
\eeq
\end{thm}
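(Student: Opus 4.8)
The plan is to run a Cea-type argument for the mixed problem in the mesh-dependent norm, assembling the ingredients already in hand: the triangle inequality, the robust inf-sup condition (\ref{infsup}), the continuity bound (\ref{con1}), the first error equation in (\ref{erroreq_mixed}), and the equilibrated flux best-approximation result (\ref{rba_equ}). Fix an arbitrary $v_h\in D_k$ and split $\tri u-u_h\tri_{\a,h}\leq \tri u-v_h\tri_{\a,h}+\tri v_h-u_h\tri_{\a,h}$. The first summand already matches the second infimum on the right-hand side, so the whole task reduces to estimating the fully discrete quantity $\tri v_h-u_h\tri_{\a,h}$, which I would control through the inf-sup condition applied to the discrete function $v_h-u_h\in D_k$.

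Thus I would begin from
\[
\beta\tri v_h-u_h\tri_{\a,h}\leq \sup_{\btau_h\in\Sigma_k}\frac{(\gradt\btau_h,\,v_h-u_h)}{\|\a^{-1/2}\btau_h\|_0}
\]
and decompose the numerator as $(\gradt\btau_h,v_h-u_h)=(\gradt\btau_h,v_h-u)+(\gradt\btau_h,u-u_h)$. For the first piece I would invoke the continuity estimate (\ref{con1}) with test function $v_h-u$, obtaining the bound $C_{con,1}\|\a^{-1/2}\btau_h\|_0\tri u-v_h\tri_{\a,h}$. For the second piece I would use the first error equation in (\ref{erroreq_mixed}) to replace $(\divv\btau_h,u-u_h)$ by $(\a^{-1}\bE,\btau_h)$ with $\bE=\bsigma-\bsigma_h$, and then Cauchy--Schwarz gives the bound $\|\a^{-1/2}\bE\|_0\|\a^{-1/2}\btau_h\|_0$. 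Dividing by $\|\a^{-1/2}\btau_h\|_0$ and taking the supremum yields
\[
\beta\tri v_h-u_h\tri_{\a,h}\leq C_{con,1}\tri u-v_h\tri_{\a,h}+\|\a^{-1/2}\bE\|_0.
\]
Bounding the flux error by the equilibrated best approximation (\ref{rba_equ}), inserting into the triangle inequality, and finally taking the infimum over $v_h\in D_k$ produces the claimed estimate with $C=\max\{1+C_{con,1}/\beta,\,1/\beta\}$; robustness is automatic because $\beta$, $C_{con,1}$, and the constant in (\ref{rba_equ}) are all independent of $\a$ and $h$.

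The step needing the most care --- and the only genuinely nonroutine point --- is the application of the continuity bound (\ref{con1}) to $v_h-u$, which is neither an element of $D_k$ nor of $H_0^1(\O)$ but a sum of the two. I would justify it by noting that the representation (\ref{rep}) extends by linearity to any piecewise-$H^1$ argument, and that because $u\in H_0^1(\O)$ has vanishing jumps across $\cE_I$ and vanishing trace on $\cE_D$, the mesh-dependent norm of the mixed argument reduces to
\[
\tri u-v_h\tri_{\a,h}^2=\|\a^{1/2}\nabla_h(u-v_h)\|_0^2+\sum_{F\in\cE_I}\frac{\a_{F,H}}{h_F}\|\jump{v_h}\|_{0,F}^2+\sum_{F\in\cE_D}\frac{\a_F}{h_F}\|v_h\|_{0,F}^2,
\]
so the proof of (\ref{con1}) applies verbatim to $v_h-u$. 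The remaining manipulations are the standard bookkeeping of a mixed Cea lemma.
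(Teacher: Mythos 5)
Your proposal is correct and follows essentially the same route as the paper's own proof: a triangle inequality, the discrete inf-sup condition (\ref{infsup}) applied to the discrete function $u_h-v_h$, the splitting $(\gradt \btau_h, u_h-v_h)=(\gradt \btau_h, u-v_h)-(\a^{-1}(\bsigma-\bsigma_h),\btau_h)$ via the first error equation in (\ref{erroreq_mixed}), a continuity bound for the mixed term, and finally the equilibrated best-approximation result (\ref{rba_equ}) to replace $\|\a^{-1/2}(\bsigma-\bsigma_h)\|_0$ by the infimum over $\Sigma_k^f$. The only cosmetic difference is that you justify the continuity step by extending (\ref{con1}) to the piecewise-$H^1$ function $u-v_h$ (using that $u$ has no jumps), whereas the paper cites (\ref{cont_hDiv}) for the same purpose; your explicit treatment of this point is, if anything, the more careful of the two, since neither continuity lemma applies verbatim to $u-v_h$.
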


\begin{proof}
By the inf-sup condition, for each $v_h \in D_k$ we have
\beq\label{uhvh}
\tri u_h - v_h\tri_{\a,h} \leq \frac{1}{\beta} \sup_{\btau_h \in \Sigma_k} \dfrac{(\gradt \btau_h, u_h-v_h)}{\|\a^{-1/2}\btau_h\|_0}.
\eeq
By the first equation in the error equations (\ref{erroreq_mixed}),
$$
(\gradt \btau_h, u_h-v_h) = (\gradt \btau_h, u-v_h) + (\gradt \btau_h, u_h-u) =
 (\gradt \btau_h, u-v_h) - (\a^{-1}(\bsigma-\bsigma_h),\,\btau_h).
$$
Then, by the continuity result \eqref{cont_hDiv} and the Cauchy-Schwarz inequality,
$$
(\gradt \btau_h, u_h-v_h) \leq C \|\btau_h\|_{\a,h} \tri u-v_h\tri_{\a,h} + \|\a^{-1/2}(\bsigma-\bsigma_h)\|_0 \|\a^{-1/2}\btau_h\|_{0}.
$$
Thus by \eqref{uhvh} and the equivalence of $ \|\btau_h\|_{\a,h}$ and $\|\a^{-1/2}\btau_h\|_{0}$,
$$
	\tri u_h - v_h\tri_{\a,h} \leq C(\tri u-v_h\tri_{\a,h}+ \|\a^{-1/2}(\bsigma-\bsigma_h)\|_0).
$$
A simple application of the triangle inequality yields
$$
\tri u-u_h\tri_{\a,h} \leq \tri u-v_h\tri_{\a,h}+\tri u_h-v_h\tri_{\a,h}
	\leq C\left(\|\a^{-1/2}(\bsigma-\bsigma_h)\|_0+ \tri u -v_h\tri_{\a,h}\right). 
$$
By the optimal convergence results of $\bsigma_h$, we have the robust best approximation result of the theorem.
\end{proof}

\begin{rem}
Even though we have the robust best approximation result (\ref{aprioriu}), due to the fact that the approximation orders of $\Sigma_k$ and $D_k$ are different for the corresponding norms, the order of convergence for $u-u_h$ in the discrete $H^1$ norm $\tri \cdot\tri_{\a,h}$ is one or two order lower than the corresponding weighted $L^2$ RT or BDM approximation errors in Theorem \ref{apriori_mixed2}, respectively. 

Due to this order difference, in the a posteriori error analysis, we should only construct the error estimator related to $\|\a^{-1/2}(\bsigma-\bsigma_h)\|_0$.
\end{rem}

Now, let us show the robust best approximation property in $\Sigma_k$.
\begin{thm} (Robust best approximation in the mixed approximation space)
The following robust best approximation properties are true with a constant $C$ independent of $\a$ and $h$:
\begin{eqnarray}
	\|\a^{-1/2}(\bsigma-\bsigma_h)\|_0 &\leq& C \inf_{\btau \in \Sigma_k}\|\bsigma- \btau_h\|_{\a,h,H(\divvr)}, \\[2mm] \label{rbahdiv}
	\|\bsigma- \bsigma_h\|_{\a,h,H(\divvr)} &\leq& C \inf_{\btau \in \Sigma_k}\|\bsigma- \btau_h\|_{\a,h,H(\divvr)}.
\end{eqnarray}
\end{thm}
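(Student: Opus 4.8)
The two estimates are essentially one result: since $\|\a^{-1/2}(\bsigma-\bsigma_h)\|_0\le\|\bsigma-\bsigma_h\|_{\a,h,H(\divvr)}$ by the very definition of the norm, the first inequality follows from \eqref{rbahdiv}. So the plan is to prove \eqref{rbahdiv}. I fix an arbitrary $\btau_h\in\Sigma_k$, aim to show $\|\bsigma-\bsigma_h\|_{\a,h,H(\divvr)}\le C\|\bsigma-\btau_h\|_{\a,h,H(\divvr)}$, and then take the infimum over $\btau_h$. I would split the target norm into its weighted $L^2$ part $\|\a^{-1/2}(\bsigma-\bsigma_h)\|_0$ and its weighted divergence part $\sum_K h_K^2\|\a^{-1/2}\gradt(\bsigma-\bsigma_h)\|_{0,K}^2$, and bound each separately.

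The divergence part is the easy half. Since $\gradt\bsigma=f$ and, by the second equations of \eqref{mixed} and \eqref{problem_mixed} together with $\gradt\bsigma_h\in D_k$, one has $\gradt\bsigma_h=Q^k_h f$; moreover $\gradt\btau_h\in D_k$, so $(I-Q^k_h)\gradt\btau_h=0$ and $Q^k_h\gradt\btau_h=\gradt\btau_h$. This yields the two projection identities
\[
 \gradt(\bsigma-\bsigma_h)=(I-Q^k_h)\gradt(\bsigma-\btau_h),
 \qquad
 \gradt(\btau_h-\bsigma_h)=-\,Q^k_h\gradt(\bsigma-\btau_h).
\]
Because $\a$ is constant on each $K$, the weight $\a^{-1/2}$ commutes with the elementwise projection $I-Q^k_h$, which is an $L^2(K)$-contraction, so the divergence part is controlled with constant one:
\[
 \sum_{K\in\cT}h_K^2\|\a^{-1/2}\gradt(\bsigma-\bsigma_h)\|_{0,K}^2
 \le \sum_{K\in\cT}h_K^2\|\a^{-1/2}\gradt(\bsigma-\btau_h)\|_{0,K}^2
 \le \|\bsigma-\btau_h\|_{\a,h,H(\divvr)}^2 .
\]

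The weighted $L^2$ part is the core, and it is exactly the first estimate of the theorem. Here I would upgrade Theorem~\ref{apriori_mixed}, whose infimum runs only over the equilibrated space $\Sigma_k^f$, to one over all of $\Sigma_k$ by correcting $\btau_h$ into $\Sigma_k^f$. Using the solvability result \eqref{orth} (equivalently the inf-sup condition \eqref{infsup2}), I construct the unique $\btau_h^f\in\Sigma_k$ with $\btau_h^f-\btau_h\in(\Sigma_k^0)^\perp$ and $\gradt\btau_h^f=Q^k_h f$, so that $\btau_h^f\in\Sigma_k^f$. Inserting $Q^k_h$ and using $v_h\in D_k$, the defining relation becomes
\[
 (\gradt(\btau_h^f-\btau_h),v_h)=(Q^k_h f-\gradt\btau_h,v_h)=(\gradt(\bsigma-\btau_h),v_h),
\]
whence, by \eqref{infsup2} and the continuity \eqref{cont_hDiv} applied to $\bsigma-\btau_h$ (which lies in $H(\divvr;\O)$ and is piecewise $H^r$, as required by the proof of \eqref{cont_hDiv}),
\[
 \beta\,\|\a^{-1/2}(\btau_h^f-\btau_h)\|_0
 \le \sup_{v_h\in D_k}\frac{(\gradt(\bsigma-\btau_h),v_h)}{\tri v_h\tri_{\a,h}}
 \le C_{con,2}\,\|\bsigma-\btau_h\|_{\a,h,H(\divvr)} .
\]
Then Theorem~\ref{apriori_mixed} applied to $\btau_h^f\in\Sigma_k^f$ and the triangle inequality give
\[
 \|\a^{-1/2}(\bsigma-\bsigma_h)\|_0
 \le \|\a^{-1/2}(\bsigma-\btau_h^f)\|_0
 \le \|\a^{-1/2}(\bsigma-\btau_h)\|_0+\|\a^{-1/2}(\btau_h^f-\btau_h)\|_0
 \le C\,\|\bsigma-\btau_h\|_{\a,h,H(\divvr)} .
\]
Combining this with the divergence bound above yields $\|\bsigma-\bsigma_h\|_{\a,h,H(\divvr)}^2\le C\,\|\bsigma-\btau_h\|_{\a,h,H(\divvr)}^2$, and taking the infimum over $\btau_h\in\Sigma_k$ proves \eqref{rbahdiv} (and, as noted, the first estimate).

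I expect the \emph{main obstacle} to be the construction-and-estimate step for the correction $\btau_h^f-\btau_h$: realizing it in $(\Sigma_k^0)^\perp$ with the prescribed divergence $Q^k_h f-\gradt\btau_h$ and controlling its weighted $L^2$-norm by $\|\bsigma-\btau_h\|_{\a,h,H(\divvr)}$. This is precisely where the robust inf-sup condition \eqref{infsup2} and the harmonic-average weighting built into the continuity estimate \eqref{cont_hDiv} (via the trace inequality \eqref{tracecombined} and the norm equivalence \eqref{norm_equ}) are indispensable, and where the $\a$-independence of the constant is won or lost.
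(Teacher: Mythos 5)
Your proposal is correct and follows essentially the same route as the paper: you build the correction $\bz_h=\btau_h^f-\btau_h\in(\Sigma_k^0)^\perp$ with prescribed divergence via the discrete solvability \eqref{orth}, bound it robustly through the inf-sup condition \eqref{infsup2} and the continuity \eqref{cont_hDiv}, and then invoke Theorem \ref{apriori_mixed} with a triangle inequality, exactly as the paper does. The only cosmetic differences are that you deduce the first estimate from the second via the norm definition and handle the divergence part by the contraction property of $I-Q^k_h$ (constant $1$), where the paper argues elementwise with triangle inequalities (constant $2$); both rest on the same identity $\gradt\bsigma_h=Q^k_hf$.
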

\begin{proof}
For an arbitrary $\btau_h \in \Sigma_k$, by \eqref{orth}, there exists a unique $\bz_h \in (\Sigma_k^0)^\perp$, such that
$$
(\gradt \bz_h, v_h) = (\gradt (\bsigma- \btau_h), v_h), \quad\forall v_h \in D_k,
$$
and 
\beq
\beta \|\a^{-1/2}\bz_h\|_0 \leq \sup_{v_h \in D_k} \dfrac{(\gradt \bz_h, v_h)} {\tri v_h \tri_{\a,h}}=
\sup_{v_h \in D_k} \dfrac{(\gradt (\bsigma- \btau_h), v_h)} {\tri v_h \tri_{\a,h}}.
\eeq
By the continuity (\ref{cont_hDiv}), 
$$
(\gradt (\bsigma- \btau_h), v_h) \leq C\tri v_h \tri_{\a,h} \|\bsigma- \btau_h\|_{\a,h,H(\divvr)}.
$$
Thus, 
$$
\|\a^{-1/2}\bz_h\|_0  \leq C \|\bsigma- \btau_h\|_{\a,h,H(\divvr)}.
$$
Setting $\btau_h^f := \bz_h + \btau_h$, it is clear that $\btau_h^f \in \Sigma_k^f$. Then by the best approximation (\ref{rba_equ}),
$$
\|\a^{-1/2}(\bsigma-\bsigma_h)\|_0 \leq 
\|\a^{-1/2}(\bsigma-\btau_h^f)\|_0 \leq 
\|\a^{-1/2}(\bsigma-\btau_h)\|_0+\|\a^{-1/2}\bz_h\|_0 \leq C \|\bsigma- \btau_h\|_{\a,h,H(\divvr)}.
$$
On the other hand, since on each element $K\in \cT$, 
$$
(\gradt \bz_h, v_h)_K = (\gradt (\bsigma- \btau_h), v_h)_K, \quad\forall v_h \in P_k(K),
$$
and $\gradt \bz_h \in P_k(K)$, we have 
$$
	\|\gradt \bz_h\|_{0,K} \leq \|\gradt (\bsigma- \btau_h)\|_{0,K}.
$$
Since $\gradt(\bsigma_h-\btau_h^f)=0$, we have
\begin{eqnarray*}
\|\a^{-1/2}\gradt(\bsigma-\bsigma_h)\|_{0,K}&\leq&  
\|\a^{-1/2}\gradt(\bsigma-\btau_h^f)\|_{0,K}+\|\a^{-1/2}\gradt(\bsigma_h-\btau_h^f)\|_{0,K}\\
&=&
\|\a^{-1/2}\gradt(\bsigma-\btau_h^f)\|_{0,K}\\
&\leq&
 \|\a^{-1/2}\gradt(\bsigma-\btau_h)\|_{0,K}+
\|\a^{-1/2}\gradt \bz_h\|_{0,K} \\
&\leq & 2  \|\a^{-1/2}\gradt(\bsigma-\btau_h)\|_{0,K}.
\end{eqnarray*}
With this, the robust best approximation property (\ref{rbahdiv}) in $\|\cdot\|_{\a,h,H(\divvr)}$ can proved. 
\end{proof}
We classify the elements in the mesh into two sets:
\begin{eqnarray}
\cT_{low} = \{ K\in \cT : 0<s_K<1\} \quad\mbox{and}\quad
\cT_{high} = \{ K\in \cT : 1\leq s_K\}.
\end{eqnarray}

\begin{thm}\label{apriori_mixed3} (Robust local a priori error estimates in weighted $H(\divvr)$ norm)
Let $(\bsigma, u)$ and $(\bsigma_h,\,u_h) \in \Sigma_k \times D_k$ $(k\geq 0)$ be the solutions of {\em (\ref{mixed})} and {\em (\ref{problem_mixed})}, respectively. Assume that $u\in H^{1+r}(\O)$ with some $r>0$ and that $u|_K\in H ^{1+s_K}(K)$ with an element-wisely defined regularity $s_K>0$ for all $K\in\cT$. Then there exists a constant $C>0$ independent $\a$ and $h$ for both the two- and three-dimension such that
\begin{eqnarray}\label{err-bound-Div2RT}
\|\bsigma- \bsigma_h\|_{\a,h,H(\divvr)} &\leq& C
	\sum_{K\in\cT_{low}} \left(h_K^{s_K} |\a^{1/2}\nabla u|_{s_K,K} + h_K \|\a^{-1/2}f\|_{0,K}\right)\\
&&\quad	+ C\sum_{K\in\cT_{high}} \left ( h_K^{\min\{k+1,s_K\}} |\a^{1/2}\nabla u|_{\min\{k+1,s_K\},K}  \right. \\
&&\quad	 \left. + h_K^{\min\{k+2,s_K\}}\|\a^{-1/2}f\|_{\min\{k+1,s_K-1\},K}\right),  RT_{k} \mbox{  case}.
	\\[2mm] \label{err-bound-Div2BDM}
\|\bsigma- \bsigma_h\|_{\a,h,H(\divvr)} &\leq& C
	\sum_{K\in\cT_{low}} \left(h_K^{s_K} |\a^{1/2}\nabla u|_{s_K,K} + h_K \|\a^{-1/2}f\|_{0,K}\right)\\
&&\quad	+ C\sum_{K\in\cT_{high}} h_K^{\min\{k+2,s_K\}} \left ( |\a^{1/2}\nabla u|_{\min\{k+2,s_K\},K}  \right. \\
&&\quad	 \left. + \|\a^{-1/2}f\|_{\min\{k+1,s_K-1\},K}\right),  BDM_{k+1} \mbox{  case}.
\end{eqnarray}
\end{thm}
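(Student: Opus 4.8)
The plan is to combine the robust best approximation property \eqref{rbahdiv} with the elementwise interpolation estimate \eqref{rti} and the commutativity relations \eqref{comm}--\eqref{comm_bdm}. First I would invoke \eqref{rbahdiv} with the natural choice $\btau_h = I^{rt,k}_{h}\bsigma$ in the RT case and $\btau_h = I^{bdm,k+1}_{h}\bsigma$ in the BDM case, reducing the whole estimate to bounding the single interpolation error $\|\bsigma - I^{\Sigma,k}_{h}\bsigma\|_{\a,h,H(\divvr)}$. Since $\a$ is constant on each $K\in\cT$ (the mesh respects the interfaces), the weight $\a^{-1/2}$ can be pulled through the interpolation and projection operators elementwise, and the identity $\a^{-1/2}\bsigma = -\a^{1/2}\nabla u$ converts all flux seminorms into the $\a^{1/2}\nabla u$ seminorms appearing on the right-hand side.

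Next I would split $\|\cdot\|_{\a,h,H(\divvr)}^2$ into its two elementwise contributions. For the $L^2$ part $\|\a^{-1/2}(\bsigma - I^{\Sigma,k}_{h}\bsigma)\|_{0,K}$ I would apply \eqref{rti} in its $\a$-weighted form. Because $u|_K\in H^{1+s_K}(K)$ gives $\bsigma|_K\in H^{s_K}(K)$, this yields $C h_K^{\min\{k+1,s_K\}}|\a^{1/2}\nabla u|_{\min\{k+1,s_K\},K}$ in the RT case and $C h_K^{\min\{k+2,s_K\}}|\a^{1/2}\nabla u|_{\min\{k+2,s_K\},K}$ in the BDM case, the extra order coming from $BDM_{k+1}$ reproducing $P_{k+1}$. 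On $\cT_{low}$ this collapses to $h_K^{s_K}|\a^{1/2}\nabla u|_{s_K,K}$ for both, matching the first term in the low-regularity sum.

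For the divergence part $h_K\|\a^{-1/2}\gradt(\bsigma - I^{\Sigma,k}_{h}\bsigma)\|_{0,K}$ I would use commutativity: in both cases $\gradt(I^{\Sigma,k}_{h}\bsigma) = Q^k_{h}\gradt\bsigma$, so $\gradt(\bsigma - I^{\Sigma,k}_{h}\bsigma) = (I - Q^k_{h})\gradt\bsigma$. Since $\bsigma = -\a\nabla u$ and \eqref{scalar} give $\gradt\bsigma = f$, this equals $(I - Q^k_{h})f$. On $\cT_{high}$, $s_K\ge 1$ implies $f|_K\in H^{s_K-1}(K)$, and the standard $L^2$-projection estimate gives $\|(I-Q^k_{h})f\|_{0,K}\le C h_K^{\min\{k+1,s_K-1\}}\|\a^{-1/2}f\|_{\min\{k+1,s_K-1\},K}$; multiplying by $h_K$ produces the exponent $1+\min\{k+1,s_K-1\}=\min\{k+2,s_K\}$ on $h_K$, matching the $\|\a^{-1/2}f\|$ terms. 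On $\cT_{low}$, where only $f\in L^2(K)$ is available, I would use the crude bound $\|(I-Q^k_{h})f\|_{0,K}\le\|f\|_{0,K}$, giving the term $h_K\|\a^{-1/2}f\|_{0,K}$. Summing over all elements and separating $\cT_{low}$ from $\cT_{high}$ then assembles \eqref{err-bound-Div2RT} and \eqref{err-bound-Div2BDM}.

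The only genuinely delicate point is the bookkeeping of regularity in the divergence term: one must recognize that the div-error is controlled not by the flux regularity $s_K$ but by the regularity $s_K-1$ of $f=\gradt\bsigma$, and that on low-regularity elements this falls below $L^2$, so no power of $h_K$ beyond the single factor already present in the norm can be extracted. Everything else is a routine combination of the cited best-approximation, interpolation, and projection estimates, with $\a^{-1/2}$ handled elementwise by the constancy of $\a$.
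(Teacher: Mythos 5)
Your proposal is correct, and all the exponents and regularity bookkeeping check out; but it is routed differently from the paper's own proof. You funnel everything through the robust best approximation result \eqref{rbahdiv}, choosing $\btau_h = I^{rt,k}_{h}\bsigma$ (resp.\ $I^{bdm,k+1}_{h}\bsigma$) and then estimating the interpolation error in the full $\|\cdot\|_{\a,h,H(\divvr)}$ norm via \eqref{rti} and the commutativity \eqref{comm}--\eqref{comm_bdm}. The paper never invokes \eqref{rbahdiv} here: it splits the norm into its two parts, quotes Theorem \ref{apriori_mixed2} (already established through the equilibrated-space best approximation of Theorem \ref{apriori_mixed}) for the weighted $L^2$ part, and for the divergence part uses the exact identity
\[
\gradt(\bsigma-\bsigma_h) = f - Q^k_{h} f,
\]
which follows from the second equation of \eqref{problem_mixed} because $\gradt\bsigma_h \in D_k$; from there the case analysis on $s_K$ (crude $L^2$ bound on $\cT_{low}$, projection estimate with $f = \a_K\Delta u \in H^{s_K-1}(K)$ on $\cT_{high}$) is identical to yours --- your observation that the divergence error is governed by the regularity $s_K-1$ of $f$ rather than by $s_K$, with no extra power of $h_K$ extractable on low-regularity elements, is exactly the paper's point. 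The trade-off is this: the paper's route is lighter, resting only on the elementary equilibrated-space argument plus an identity, so the divergence part carries no generic constant at all (indeed $\|f-Q^k_{h}f\|_{0,K}$ is the minimal possible elementwise divergence error over all of $\Sigma_k$), and it bypasses the inf-sup and mesh-dependent continuity machinery that underlies \eqref{rbahdiv}. Your route, by contrast, is the canonical ``C\'{e}a-type lemma plus local interpolation'' template announced in the paper's introduction; it is perfectly legitimate once \eqref{rbahdiv} is available, but it inherits the generic constant of \eqref{rbahdiv} on both parts of the norm. Both arguments yield the same rates and the same robustness in $\a$ and $h$.
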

\begin{proof}
By the definition of the norm $\|\cdot\|_{\a,h,H(\divvr)}$, we only need to discuss the term 
$$
h_K\|\a^{-1/2}\gradt (\bsigma- \bsigma_h)\|_{0,K} = h_K\|\a^{-1/2}(f-Q^k_h f)\|_{0,K}
$$ 
for each element $K\in\cT$.

The first case is that the regularity is low in the element $K\in \cT_{low}$, with $0< s_K <1$. In this case, notice that $f \in L^2(K)$, thus
$$
h_K\|\a^{-1/2}(f-Q^k_h f)\|_{0,K} \leq h_K\|\a^{-1/2}f\|_{0,K}.
$$ 
Compared to the error $h_K^{s_K} |\a^{1/2}\nabla u|_{s_K,K}$ from the weighted $L^2$ approximation, it is of high order.

The other case is that $s_K \geq 1$ in the element $K$. Note that $\a_K$ is assumed to be a constant in $K$, thus $f = \gradt(\a_K\nabla u) = \a_K \Delta u \in H^{s_K-1}(K)$, thus
$$
h_K\|\a^{-1/2}(f-Q^k_h f)\|_{0,K} \leq C h_K^
{\min\{s_K,k+2\}}\|\a^{-1/2}f\|_{\min\{s_K-1,k+1\},K}.
$$ 
Compared with the weighted $L^2$ error, this term is of the same order for the $BDM_{k+1}$ approximation and one order high for the $RT_k$ approximation.
\end{proof}

\begin{rem}
One may want to use the Brezzi's theory directly as in \cite{LS:06} to get the following a priori error estimate
$$
\tri u - u_h\tri_{\a,h} +\|\bsigma - \bsigma_h\|_{\a, h} \leq C\left(\inf_{v\in D_k}\tri u - v_h\tri_{\a,h} +\inf_{\btau \in \Sigma_k}\|\bsigma - \btau_h\|_{\a, h}\right ).
$$
This is not right, since for problems with a low regularity, the $L^2$ norm of the trace $\|\bsigma\cdot\bn\|_{0,F}$ is not defined and thus $\|\bsigma \|_{\a,h}$ is not well-defined. Also, the result obtained by this is sub-optimal for the flux approximation.
\end{rem}

\begin{rem}
In the standard mixed method analysis, the $L^2$ norm of $u-u_h$ is analyzed and it has the same order convergence as the $RT$ approximation. In the case of the robust local a priori error estimate, we cannot get a robust local estimate for $\|\a^{1/2}(u-u_h)\|_0$ since robust an inf-sup condition
$$
\sup_{\btau_h \in \Sigma_k} \dfrac{(\gradt \btau_h,v_h)}{ \|\btau_h\|_{\a,h,H(\divvr)}} 
	\geq \beta \| \a^{1/2} v_h \|_0 \quad \forall\, v_h \in D_k,
$$
with a constant $\beta$ independent of $h$ and $\a$ is not available. 
\end{rem}

\section{Stenberg's Post-processing}
Since in the mixed methods, the approximation $u_h$ measured in the weighted discrete $H^1$ energy norm is lower than that of the approximation of the flux, we  introduce the Stenberg's post-processing to get a same order approximation.

On each element $K\in \cT$, if  $(\bsigma_h,u_h) \in RT_k\times D_k$ ($k\geq 0$) or $(\bsigma_h,u_h) \in BDM_k\times D_{k-1}$ ($k\geq 1$), i.e., the index of the flux approximation space is $k$, we find a $u_{h,K}^* \in P_{k+1}(K)$, such that
\beq
	(\a \nabla u_{h,K}^*, \nabla v_h)_K = (f,v_h)_K - (\bsigma_h\cdot\bn, v_h)_{\p K}, \quad \forall v_h\in P_{k+1}(K)/\R,
\eeq
and 
\beq
	\int_K u_{h,K}^* dx = \int_K u_{h} dx. 
\eeq

We first prove the following trace theorem by using techniques in \cite{BH:01,CaYeZh:11}.
\begin{thm}
For an element $K\in\cT$ with the mesh size $h_K$, we have
\beq \label{trace}
\|\btau\cdot\bn\|_{-1/2,\p K} \leq C(\|\btau\|_{0,K} + h_K \|\gradt \btau\|_{0,K}), \quad \forall \btau \in H(\divvr;K).
\eeq 
\end{thm}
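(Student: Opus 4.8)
The plan is to establish the trace estimate by a scaling argument that reduces the problem to a fixed reference element, where the inequality becomes a statement about equivalence of norms on a finite setting. The key observation is that the $H^{-1/2}(\partial K)$ norm is dual to $H^{1/2}(\partial K)$, so I would first rewrite $\|\btau\cdot\bn\|_{-1/2,\partial K}$ as a supremum over boundary data, and then lift that boundary data into the interior to exploit the structure of $H(\divvr;K)$.

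First I would invoke the trace lifting: for any $g\in H^{1/2}(\partial K)$ there exists $w\in H^1(K)$ with $w|_{\partial K}=g$ and a stable bound $\|w\|_{1,K}\le C\,\|g\|_{1/2,\partial K}$. Using the duality characterization,
\[
\|\btau\cdot\bn\|_{-1/2,\partial K}
= \sup_{0\neq g\in H^{1/2}(\partial K)} \frac{\langle \btau\cdot\bn,\,g\rangle_{\partial K}}{\|g\|_{1/2,\partial K}}.
\]
The pairing $\langle \btau\cdot\bn, g\rangle_{\partial K}$ for $\btau\in H(\divvr;K)$ is exactly the boundary term in the integration-by-parts (Green's) formula, so I would replace it by its interior representation
\[
\langle \btau\cdot\bn,\,g\rangle_{\partial K}
= (\btau,\nabla w)_K + (\gradt\btau,\,w)_K,
\]
where $w$ is the lifting of $g$. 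Applying the Cauchy--Schwarz inequality to each interior term gives
\[
\langle \btau\cdot\bn,\,g\rangle_{\partial K}
\le \|\btau\|_{0,K}\,\|\nabla w\|_{0,K} + \|\gradt\btau\|_{0,K}\,\|w\|_{0,K}.
\]

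The remaining step is to track the correct power of $h_K$ when bounding $\|\nabla w\|_{0,K}$ and $\|w\|_{0,K}$ in terms of $\|g\|_{1/2,\partial K}$. This is where the scaling must be done carefully: I would map $K$ to a reference element $\hat K$, construct the lifting $\hat w$ on $\hat K$ with the $h$-independent stability bound, and pull back. Under the affine scaling, $\|\nabla w\|_{0,K}$ scales so that it pairs with $\|\btau\|_{0,K}$ without an extra $h_K$ factor, while $\|w\|_{0,K}$ picks up one extra factor of $h_K$ relative to $\|\nabla w\|_{0,K}$, which is precisely the source of the $h_K\|\gradt\btau\|_{0,K}$ term. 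After optimizing the lifting (one may normalize so that $\|g\|_{1/2,\partial K}$ controls the scaled interior norms with the stated $h_K$-weights), dividing by $\|g\|_{1/2,\partial K}$ and taking the supremum over $g$ yields (\ref{trace}).

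The main obstacle I anticipate is the bookkeeping of the $h_K$ powers through the scaling of the fractional $H^{1/2}(\partial K)$ norm together with the lifting operator, since the fractional boundary norm and the volume norms scale differently and the constant $C$ must be verified to depend only on the shape-regularity of $K$ and not on $h_K$ or on $\btau$. Once the correct reference-element lifting is fixed and its stability constant absorbed into $C$, the estimate follows directly from Green's formula and Cauchy--Schwarz as above.
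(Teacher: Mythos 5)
Your proposal is correct and follows essentially the same route as the paper: duality characterization of the $H^{-1/2}(\p K)$ norm, Green's formula to move to interior terms, Cauchy--Schwarz, and a lifting of the boundary datum constructed on a reference element whose pullback satisfies $\|\nabla w_g\|_{0,K} + h_K^{-1}\|w_g\|_{0,K} \leq C \|g\|_{1/2,\p K}$, which is exactly the $h_K$-bookkeeping you identify. The only difference is cosmetic: the paper realizes the lifting concretely as the solution of $-\Delta z + z = 0$ on the reference element with Dirichlet data $g$, whereas you invoke an abstract stable trace lifting, which carries the same content.
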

\begin{proof}
For any $\btau\in H(\divvr;K)$ and $v\in H^1(K)$, we have the following identity:
\beq
\langle v, \btau\cdot\bn \rangle_{\p K} =(\btau, \nabla v)_K + (\gradt \btau, v)_K,
\eeq
where $\langle v, \btau\cdot\bn\rangle_{\p K}$ should be viewed as the duality pair between
$H^{1/2}(\p K)$ and $H^{-1/2}(\p K)$. 
Thus 
$$
\|\btau\cdot\bn\|_{-1/2,\p K} = \sup_{v\in H^{1/2}(\p K)} \dfrac
{(\btau, \nabla v)_K + (\gradt \btau, v)_K}{\|v\|_{1/2,\p K}}.
$$
On a reference element $\hat{K}$, given $g\in H^{1/2}(\p \hat{K})$, consider the following equation 
$$
- \Delta z + z =0 \in \hat{K}, \quad z = g \mbox{  on  } \p \hat{K}.
$$
By the elliptic stability theory, we have
$$
\|\nabla z\|_{0,\hat{K}} + \| z\|_{0,\hat{K}} \leq C\|g\|_{1/2, \p \hat{K}}.
$$
Mapping back to the physical element $K$ we have that given a $g\in H^{1/2}(\p K)$, 
there exits a $w_g\in H^1(K)$ and $w=g$ on $\p K$, such that 
$$
\|\nabla w_g\|_{0, K} + h_K^{-1} \| w_g\|_{0, K} \leq C\|g\|_{1/2, \p K}.
$$
Thus
$$
\|\btau\cdot\bn\|_{-1/2,\p K} 
\leq \dfrac
{(\btau, \nabla w_g)_K + (\gradt \btau, w_g)_K}{\|g\|_{1/2,\p K}}
\leq 
C(\|\btau\|_{0,K} + h_K \|\gradt \btau\|_{0,K}).
$$
The 
\end{proof}

\begin{thm} In each element $K\in\cT$, the following robust best approximation property holds:
\beq
\|\a^{1/2}_K\nabla(u-u_{h,K}^*)\|_{0,K} \leq C \left( \inf_{w_h \in P_{k+1}(K)}\|\a^{1/2}_K\nabla (u-w_h)\|_{0,K} + \|\bsigma-\bsigma_h\|_{\a,h, H(\divvr),K}
\right).
\eeq
\end{thm}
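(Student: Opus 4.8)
The plan is to cast $u-u_{h,K}^*$ as the error of a local nonconforming problem whose consistency defect lives on $\p K$, and then run a Strang/C\'ea-type coercivity argument, controlling the consistency term with the new normal-trace inequality (\ref{trace}).

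First I would record the local variational identity satisfied by the exact potential. Because $\bsigma=-\a\nabla u\in H(\divvr;\O)$ with $\divv\bsigma=f$, integration by parts on $K$ (the normal trace understood in the $H^{-1/2}(\p K)$--$H^{1/2}(\p K)$ duality) gives
\[
(\a\nabla u,\nabla v)_K=(f,v)_K-\langle\bsigma\cdot\bn,v\rangle_{\p K},\qquad\forall v\in H^1(K).
\]
Subtracting the defining relation of $u_{h,K}^*$ yields the local error equation
\[
(\a\nabla(u-u_{h,K}^*),\nabla v_h)_K=\langle(\bsigma_h-\bsigma)\cdot\bn,v_h\rangle_{\p K},\qquad\forall v_h\in P_{k+1}(K).
\]
A point I would check carefully is that this identity extends to constant $v_h$: the left side then vanishes and the right side equals $v_h\int_K\divv(\bsigma_h-\bsigma)=0$, because $\divv\bsigma_h$ is the $L^2$-projection of $f$ onto the (constant-containing) discrete space and hence has the same elementwise mean as $f$. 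Consequently the consistency functional $v_h\mapsto\langle(\bsigma_h-\bsigma)\cdot\bn,v_h\rangle_{\p K}$ is unchanged when a constant is added to $v_h$; this invariance is exactly what makes the scaling work below.

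Next I would fix an arbitrary $w_h\in P_{k+1}(K)$, set $\phi_h:=u_{h,K}^*-w_h$, and test the error equation with $\phi_h$ to obtain
\[
\|\a_K^{1/2}\nabla\phi_h\|_{0,K}^2=\langle(\bsigma-\bsigma_h)\cdot\bn,\phi_h\rangle_{\p K}+(\a\nabla(u-w_h),\nabla\phi_h)_K.
\]
The second term is immediately bounded by $\|\a_K^{1/2}\nabla(u-w_h)\|_{0,K}\,\|\a_K^{1/2}\nabla\phi_h\|_{0,K}$ via Cauchy--Schwarz and the constancy of $\a_K$ on $K$. The boundary term is the crux. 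Using the invariance noted above I would replace $\phi_h$ by $\phi_h-\bar\phi_h$ (its element mean) and apply the $H^{-1/2}$--$H^{1/2}$ duality together with the trace inequality (\ref{trace}):
\[
\langle(\bsigma-\bsigma_h)\cdot\bn,\phi_h\rangle_{\p K}\le\|(\bsigma-\bsigma_h)\cdot\bn\|_{-1/2,\p K}\,\|\phi_h-\bar\phi_h\|_{1/2,\p K}\le C\big(\|\bsigma-\bsigma_h\|_{0,K}+h_K\|\divv(\bsigma-\bsigma_h)\|_{0,K}\big)\|\phi_h-\bar\phi_h\|_{1/2,\p K}.
\]
The scaled trace theorem gives $\|\phi_h-\bar\phi_h\|_{1/2,\p K}\le C(\|\nabla\phi_h\|_{0,K}^2+h_K^{-2}\|\phi_h-\bar\phi_h\|_{0,K}^2)^{1/2}$, and Poincar\'e's inequality for the mean-zero $\phi_h-\bar\phi_h$ reduces the right-hand side to $C\|\nabla\phi_h\|_{0,K}$; since $\a_K$ is constant, the first factor equals $\a_K^{1/2}\|\bsigma-\bsigma_h\|_{\a,h,H(\divvr),K}$ up to a constant. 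Multiplying, the two half-powers of $\a_K$ recombine into $\|\a_K^{1/2}\nabla\phi_h\|_{0,K}$, so that
\[
\langle(\bsigma-\bsigma_h)\cdot\bn,\phi_h\rangle_{\p K}\le C\,\|\bsigma-\bsigma_h\|_{\a,h,H(\divvr),K}\,\|\a_K^{1/2}\nabla\phi_h\|_{0,K}.
\]

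Inserting both bounds and dividing by $\|\a_K^{1/2}\nabla\phi_h\|_{0,K}$ gives $\|\a_K^{1/2}\nabla\phi_h\|_{0,K}\le C(\|\a_K^{1/2}\nabla(u-w_h)\|_{0,K}+\|\bsigma-\bsigma_h\|_{\a,h,H(\divvr),K})$, and the triangle inequality $\|\a_K^{1/2}\nabla(u-u_{h,K}^*)\|_{0,K}\le\|\a_K^{1/2}\nabla(u-w_h)\|_{0,K}+\|\a_K^{1/2}\nabla\phi_h\|_{0,K}$ followed by taking the infimum over $w_h\in P_{k+1}(K)$ delivers the claim. I expect the main obstacle to be the boundary consistency term: one must (i) exploit the elementwise equilibration $\int_K\divv(\bsigma-\bsigma_h)=0$ to subtract the mean and make the duality pairing scale-invariant, and (ii) match the $h_K$- and $\a_K$-powers so that the trace inequality (\ref{trace}) converts cleanly into the weighted norm $\|\cdot\|_{\a,h,H(\divvr),K}$; everything else is a routine coercivity computation.
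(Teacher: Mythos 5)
Your proposal is correct and takes essentially the same route as the paper's own proof: the same local error identity obtained by subtracting the definition of $u_{h,K}^*$ from the exact relation $(\a_K\nabla u,\nabla v)_K=(f,v)_K-(\bsigma\cdot\bn,v)_{\p K}$, the same treatment of the boundary consistency term via the $H^{-1/2}(\p K)$--$H^{1/2}(\p K)$ duality, the trace inequality (\ref{trace}), a scaled trace/Poincar\'e bound giving $\|\phi_h-\overline{\phi}_h\|_{1/2,\p K}\leq C\|\nabla \phi_h\|_{0,K}$, and a concluding triangle inequality with an infimum over $w_h$. The only (harmless) difference is cosmetic: you first extend the error equation to all of $P_{k+1}(K)$ by checking constants via $\int_K\divv(\bsigma-\bsigma_h)\,dx=0$ and then re-subtract the mean, whereas the paper works directly with the mean-zero test function $v_h-\overline{v}_h$ in the quotient space $P_{k+1}(K)/\R$; your handling of this point is, if anything, slightly more explicit.
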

\begin{proof}
Let $w_h$ be an arbitrary function in $P_{k+1}(K)$, and $v_h= u_{h,K}^* - w_h$. Let $\overline{v}_h = \int_K v_h dx /|K|$ be the average of $v_h$ on $K$, then $v_h-\overline{v}_h$ belongs to the test space $P_{k+1}(K)/\R$. Then
\begin{eqnarray*}
\|\a^{1/2}_K\nabla(u_{h,K}^* - w_h)\|_{0,K}^2 
& = &\|\a^{1/2}_K\nabla v_h\|_{0,K}^2 = (\a\nabla(u_{h,K}^* - w_h), \nabla v_h)_K\\
& = &  (\a_K\nabla u_{h,K}^*,\nabla (v_h- \overline{v}_h))_K -(\a\nabla w_h, \nabla v_h)_K\\
& = & (f,v_h-\overline{v}_h)_K - (\bsigma_h\cdot\bn, v_h-\overline{v}_h)_{\p K} -(\a_K\nabla w_h, \nabla v_h)_K \\
& = & (\a_K \nabla (u-w_h),\nabla v_h)_K +((\bsigma-\bsigma_h)\cdot\bn, v_h-\overline{v}_h)_{\p K},
\end{eqnarray*}
where we use the fact that
$(\a_K \nabla u, \nabla v)_K = (f,v)_K - (\bsigma\cdot\bn, v)_{\p K}$ is true for any $v \in H^1(K)$.
 
By the Cauchy-Schwarz inequality, $(\a_K \nabla (u-w_h),\nabla v_h)_K \leq  \|\a_K^{1/2}\nabla (u-w_h)\|_{0,K}\|\a_K^{1/2}\nabla v_h\|_{0,K}$. By the definition of the dual norm, the trace inequality \eqref{trace}, and the fact $\|v_h-\overline{v}_h\|_{0,K} \leq C h_K\|\nabla v_h\|_{0,K}$, we have
\begin{eqnarray*}
	((\bsigma-\bsigma_h)\cdot\bn, v_h-\overline{v}_h)_{\p K} &\leq& \|\a^{-1/2}(\bsigma-\bsigma_h)\cdot\bn\|_{-1/2,\p K} \|\a^{1/2}(v_h-\overline{v}_h)\|_{1/2,\p K} \\
	&\leq & C h_K^{-1}\|\a^{1/2}(v_h-\overline{v}_h)\|_{0,K} \|\a^{-1/2}(\bsigma-\bsigma_h)\cdot\bn\|_{-1/2,\p K}\\
	&\leq & C \|\a^{1/2}\nabla v_h\|_{0,K}(\|\a^{-1/2}(\bsigma-\bsigma_h)\|_{0,K} + h_K\|\a^{-1/2}\gradt(\bsigma-\bsigma_h)\|_{0,K}).  
\end{eqnarray*}
Thus
$$
\|\a^{1/2}\nabla(u_{h,K}^* - w_h)\|_{0,K} \leq 
C(\|\a^{1/2}\nabla (u-w_h)\|_{0,K} + 
\|\a^{-1/2}(\bsigma-\bsigma_h)\|_{0,K} + h_K\|\a^{-1/2}\gradt(\bsigma-\bsigma_h)\|_{0,K}).
$$
By the triangle inequality, 
\beq
\|\a^{1/2}\nabla(u-u_{h,K}^*)\|_{0,K} \leq \|\a^{1/2}\nabla(u-w_h)\|_{0,K}+\|\a^{1/2}\nabla(u_{h,K}^* - w_h)\|_{0,K}.
\eeq
The theorem is proved.
\end{proof}

By the approximation property of $P_{k+1}(K)$, and the robust local optimal error estimate of $\bsigma_h$, we immediately have the following robust local optimal error estimate for the Stenberg's post-processing. 

\begin{thm}
For both the $(\bsigma_h,u_h) \in RT_k\times D_k$ ($k\geq 0$) or $(\bsigma_h,u_h) \in BDM_k\times D_{k-1}$ ($k\geq 1$) case, the Stenberg's recovery  $u_{h,K}^* \in P_{k+1}(K)$ has the following robust local a priori error estimate in the low regularity elements $K\in \cT_{low}$ with $0\leq s_K<1$:
\beq
\|\a^{1/2}_K\nabla(u-u_{h,K}^*)\|_{0,K}  \leq
C h_K^{s_K} |\a^{1/2}\nabla u|_{s_K,K} + h_K \|\a^{-1/2}f\|_{0,K}, K \in \cT_{low}.
\eeq
For those elements $K\in \cT_{high}$ with $1\leq s_K$, the following robust local a priori error estimate holds:
\begin{eqnarray}
\|\a^{1/2}_K\nabla(u-u_{h,K}^*)\|_{0,K}  &\leq& 
C  \left(  h_K^{\min\{k+1,s_K\}} |\a^{1/2}\nabla u|_{\min\{k+1,s_K\},K}  \right. \\
&& \left.+ h_K^{\min\{k+2,s_K\}}\|\a^{-1/2}f\|_{\min\{k+1,s_K-1\},K} \right), RT_{k}\times D_k \mbox{  case}.
	\\[2mm] 
\|\a^{1/2}_K\nabla(u-u_{h,K}^*)\|_{0,K} &\leq&  C h_K^{\min\{k+1,s_K\}} \left (|\a^{1/2}\nabla u|_{\min\{k+1,s_K\},K}  \right. \\
&&\quad	 \left. + \|\a^{-1/2}f\|_{\min\{k,s_K-1\},K}\right),  BDM_{k}\times D_{k
-1} \mbox{  case}.
\end{eqnarray}
\end{thm}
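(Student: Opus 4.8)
The plan is to feed the two already-established local estimates into the robust best approximation bound for $u_{h,K}^*$ proved in the immediately preceding theorem, namely
\[
\|\a^{1/2}_K\nabla(u-u_{h,K}^*)\|_{0,K} \leq C\Big(\inf_{w_h\in P_{k+1}(K)}\|\a^{1/2}_K\nabla(u-w_h)\|_{0,K} + \|\bsigma-\bsigma_h\|_{\a,h,H(\divvr),K}\Big),
\]
and then to bound each of the two terms on the right by purely local quantities of the asserted orders, while carrying along the two stable-pair conventions $RT_k\times D_k$ (flux index $k$, so $L^2$-projection $Q^{k}_{h}$) and $BDM_k\times D_{k-1}$ (flux index $k$, so $L^2$-projection $Q^{k-1}_{h}$).

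First I would dispose of the polynomial term. Since $\a_K$ is constant on $K$, the factor $\a^{1/2}_K$ commutes with $\nabla$ and may be pulled inside the seminorm; a standard Bramble--Hilbert argument in $P_{k+1}(K)$ applied to $u|_K\in H^{1+s_K}(K)$ then gives
\[
\inf_{w_h\in P_{k+1}(K)}\|\a^{1/2}_K\nabla(u-w_h)\|_{0,K} \leq C\,h_K^{\min\{k+1,s_K\}}\,|\a^{1/2}\nabla u|_{\min\{k+1,s_K\},K},
\]
which is exactly the first term in both the $RT$ and the $BDM$ conclusions and collapses to $h_K^{s_K}|\a^{1/2}\nabla u|_{s_K,K}$ on $\cT_{low}$.

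The flux term splits, by the definition of $\|\cdot\|_{\a,h,H(\divvr),K}$, into $\|\a^{-1/2}(\bsigma-\bsigma_h)\|_{0,K}$ and the divergence part $h_K\|\a^{-1/2}\gradt(\bsigma-\bsigma_h)\|_{0,K}$. For the divergence part I would reuse the identity from the proof of Theorem \ref{apriori_mixed3}: the commutativity (\ref{comm})/(\ref{comm_bdm}) gives $\gradt(\bsigma-\bsigma_h)=f-Q^{m}_{h}f$ locally, with $m=k$ in the $RT_k\times D_k$ case and $m=k-1$ in the $BDM_k\times D_{k-1}$ case. On $\cT_{low}$ one simply estimates $\|f-Q^{m}_{h}f\|_{0,K}\le\|f\|_{0,K}$, yielding the high-order contribution $h_K\|\a^{-1/2}f\|_{0,K}$; on $\cT_{high}$, using $f=\a_K\Delta u\in H^{s_K-1}(K)$ and the $L^2$-projection approximation property gives $h_K\|\a^{-1/2}(f-Q^{m}_{h}f)\|_{0,K}\le C\,h_K^{\,1+\min\{s_K-1,m+1\}}\|\a^{-1/2}f\|_{\min\{s_K-1,m+1\},K}$. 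The leading factor $\|\a^{-1/2}(\bsigma-\bsigma_h)\|_{0,K}$ I would control by the robust local estimate of $\bsigma_h$ behind Theorem \ref{apriori_mixed2}: taking $\btau_h^f=I^{\Sigma,k}_{h}\bsigma$ and invoking the local interpolation bound (\ref{rti}) produces the order $h_K^{\min\{k+1,s_K\}}|\a^{1/2}\nabla u|_{\min\{k+1,s_K\},K}$ in both cases.

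The only real obstacle is the index bookkeeping that reconciles the two conventions: with $m=k$ the divergence exponent becomes $1+\min\{s_K-1,k+1\}=\min\{s_K,k+2\}$ with smoothness index $\min\{s_K-1,k+1\}$, matching the $RT_k$ statement, whereas with $m=k-1$ it becomes $1+\min\{s_K-1,k\}=\min\{s_K,k+1\}$ with index $\min\{s_K-1,k\}$, matching the $BDM_k$ statement and merging with the polynomial term into the single prefactor $h_K^{\min\{k+1,s_K\}}$. A subtler point worth flagging is that $\|\a^{-1/2}(\bsigma-\bsigma_h)\|_{0,K}$ is, strictly speaking, governed by a \emph{global} best approximation (Theorem \ref{apriori_mixed2} is a sum over $\cT$); to keep the final estimate element-local one reads that bound through the local interpolant $I^{\Sigma,k}_{h}\bsigma$ and the completely local estimate (\ref{rti}), so that the exponent matches the local regularity $s_K$. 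Collecting the terms, separating $\cT_{low}$ from $\cT_{high}$, and absorbing constants then yields the stated element-wise bounds; every ingredient used---(\ref{rti}), Bramble--Hilbert, the $L^2$-projection bound, and the post-processing best-approximation constant---is independent of $\a$ and $h$, so $C$ is as well.
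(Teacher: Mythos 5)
Your proposal is correct and follows essentially the same route as the paper: the paper's entire proof is the single sentence preceding the theorem, which plugs the approximation property of $P_{k+1}(K)$ and the robust flux error estimates (Theorem~\ref{apriori_mixed3}, applied with the index shift $k\to k-1$ for the $BDM_k\times D_{k-1}$ pair) into the best-approximation bound of the preceding theorem --- exactly your argument, with the same splitting of $\|\bsigma-\bsigma_h\|_{\a,h,H(\divvr),K}$ into the $L^2$ part and the $f-Q^m_hf$ divergence part, and the same index bookkeeping. The locality caveat you flag (that $\|\a^{-1/2}(\bsigma-\bsigma_h)\|_{0,K}$ is only controlled through a global best-approximation sum over $\cT$, so the bound is not genuinely confined to data on $K$) is real, but it is an imprecision shared by the paper's own statement and one-line proof, not a defect of your argument relative to it.
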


\begin{rem}
There are other post-processings available, such as the one proposed in \cite{AC:95} and analyzed in \cite{Voh:10}. The recovered potential is also mainly from the numerical flux $\bsigma_h$, a similar robust and local optimal a priori error estimate can also be derived.

It is also well known if the mixed method is implemented by hybridization, the Lagrange multiplier is also a better approximation of $u$ than $u_h$, and is a good source for post-processing or solution reconstruction. With careful analysis, it should not be hard to derive robust and local optimal result for the Lagrange multiplier and its post-processed solution under a similar weighted discrete $H^1$ norm.
\end{rem}

\section{Final comments}

In this paper, for elliptic interface problems in two- and three-dimensions with a possible very low regularity, we establish robust and local optimal a priori error estimates for the Raviart-Thomas and Brezzi-Douglas-Marini mixed finite element approximations. For the flux approximation, we show the robust best best approximation in the discrete equilibrated space and the whole mixed approximation space with appropriated norms, an $\a$-weighted $L^2$ norm or an $(\a,h)$-weighted $H(\divvr)$ norms. We show the robust local optimal error estimates for the flux approximation in these norms. For the potential approximation, we show a robust best approximation result in a weighted discrete $H^1$ norm and show that the convergence order is sub-optimal compared to the flux approximation. We then show that with the flux as the main source of post-processing, the Stenberg's post-processing can recover a potential with the robust local optimal error estimate.

 These robust and local optimal a priori estimates provide guidance for constructing robust a posteriori error estimates and adaptive methods for the mixed approximations. For robust a posteriori error for the mixed methods of the interface problem, we should focus on $\|\a^{-1/2}(\bsigma-\bsigma_h)\|_0$, like the approaches in \cite{Ain:07,CaZh:10,Kim:07,Voh:10}. The approaches in \cite{BrVe:96,LS:06} are not right since they are all try to put $u_h$ into the estimator. If any post-processing  is going to be used to construct the a posteriori error estimator, the main source of information should be the numerical flux $\bsigma_h$, not the numerical potential $u_h$ itself.

\end{document}